\newcommand{\hide}[1]{}
\newtheorem{Th}{Theorem}
\newtheorem{Prop}[Th]{Proposition}
\newtheorem{Le}[Th]{Lemma}
\newtheorem{Cor}[Th]{Corollary}
\theoremstyle{definition}
\newtheorem*{Rk}{Remark}
\newcommand{\dint}{\mathrm{d}}
\newcommand{\vol}[2]{ \mathrm{V}_{#1} \left( #2 \right) }
\newcommand{\window}{\mathbf{W}_\rho}
\newcommand{\neighbours}{ \mathcal{N} }
\newcommand{\RR}{\mathbf{R}}
\newcommand{\NN}{\mathbf{N}}
\newcommand{\XX}{\eta}
\newcommand{\SSS}{\mathbf{S}^{d-1}}
\newcommand{\del}{\mathrm{Del}}
\newcommand{\typ}{\mathcal{D}^{0}}
\newcommand{\conv}[2][n]{\underset{#1\rightarrow #2}{\longrightarrow}}
\newcommand{\eq}[2][n]{\underset{#1\rightarrow #2}{\sim}}
\newcommand{\EEE}[1]{\operatorname{\mathbb{E}}\left[\,#1\,\right]}
\newcommand{\PPP}[1]{\operatorname{\mathbb{P}}\left(\,#1\,\right)}
\newcommand{\PP}{\operatorname{\mathbb{P}}}
\newcommand{\ind}[1]{\mathbb{1}_{\{#1\}}\,}
\renewcommand{\mid}{ : }
\begin{document}
  \pagenumbering{arabic}
  \pagestyle{plain}
  
\author{Gilles BONNET\footnote{Ruhr-Universit\"{a}t Bochum, Universit\"{a}tstr, 150, Fakult\"{a}t f\"{u}r Mathematik, 44780 Bochum, Germany. E-mail: gilles.bonnet@ruhr-uni-bochum.de}\and
Nicolas CHENAVIER\footnote{Universit\'e du Littoral C\^ote d'Opale, EA 2797, LMPA, 50 rue Ferdinand Buisson, F-62228 Calais, France. E-mail: nicolas.chenavier@univ-littoral.fr}}

  \title{The maximal degree in a Poisson-Delaunay graph}
  \maketitle

\begin{abstract}
We investigate the maximal degree in a Poisson-Delaunay graph in $\RR^d$, $d\geq 2$, over all nodes in the window $\window:= \rho^{1/d}[0,1]^d$ as $\rho$ goes to infinity. The exact order of this maximum is provided in any dimension. In the particular setting $d=2$, we show that this quantity  is concentrated on two consecutive integers with high probability. An extension of this result is discussed when $d\geq 3$.
\end{abstract}

\textbf{Keywords:} Degree; Delaunay graph; Extreme values; Poisson point process


\textbf{AMS 2010 Subject Classifications:} 60D05 . 60F05 . 62G32 

  \section{Introduction}

 Delaunay graphs are a very popular structure in computational geometry~\cite{aurenhammer2013voronoi} and are extensively used in many areas such as surface reconstruction, mesh generation, molecular modeling, and medical image segmentation, see e.g. \cite{cazals2006delaunay,cheng2012delaunay}. The book by Okabe et al. \cite{OBSC} gives a taste of the richness of the theory of these graphs and of the variety of their applications.  In this paper, we consider a Poisson-Delaunay graph that is a random Delaunay graph based on a stationary Poisson point process in $\RR^d$, $d\geq 2$. 
 
Recently, extremes of various quantities associated with Poisson-Delaunay graphs have been investigated by Chenavier, Devillers and Robert. In \cite{ChenDev} the length of the shortest path between two distant vertices is considered.  In \cite{Chen,ChenRob}, the extremes studied are the largest or smallest values of a given geometric characteristic, such as the volume or the circumradius, over all simplices in the Poisson-Delaunay graph with incenter in a large window. For a broad panorama of extreme values arising from construction based on a Poisson point process, we refer the reader to \cite{ST2}. 
 
 However, all the  distributions of the random variables which are considered in the literature have a probability density function.  In this paper, we deal with the case of a discrete random variable, namely the maximal degree. More precisely, let $\XX$ be a stationary Poisson point process in $\RR^d$. Without restriction, we assume that the intensity of $\XX$ equals 1. Let  $\window=\rho^{1/d}[0,1]^d$, where $\rho$ is a positive real number. We investigate the asymptotic behaviour, as $\rho$ goes to infinity, of the following random variable:
\[\Delta_\rho:=\max_{x\in \XX\cap \window} d_\XX(x), \] where $d_\XX(x)$ denotes the degree of any node $x\in \XX$ in the Poisson-Delaunay graph associated with $\XX$, i.e.\ the number of (non-oriented) edges passing through $x$ (see Figure \ref{fig:del}). The maximal degree of random combinatorial graphs has been extensively investigated, see e.g.\ \cite{Bo,CGS,DGNPS,GW,GMN,MR,MSW2}. Much less has been done when the vertices are given by a point process and the edges built according to geometric constraints. To the best of the author's knowledge, one of the first results on the maximal degree in a Poisson-Delaunay graph was due to Bern \textit{et al.} (see Theorem 7 in \cite{BEY}) who showed that \begin{equation}
\label{eq:bose}\EEE{\Delta_\rho}=\Theta\left(\frac{\log \rho}{\log\log \rho} \right)
\end{equation} in any dimension $d\geq 2$. Broutin \textit{et al.} \cite{BDH} went on to provide a new bound for $\Delta_\rho$ in the following sense: when $d=2$, with probability tending to 1, the maximal degree $\Delta_\rho$ is less than $(\log \rho)^{2+\xi}$, with $\xi>0$. Our main theorem significantly improves  these two results in dimension two.

  \begin{Th}
    \label{Th:maxdegree}
    Let $\Delta_\rho$ be the maximal degree in  a planar Poisson-Delaunay graph over all nodes in $\window=\rho^{1/2}[0,1]^2$. Then there exists a deterministic function $\rho\mapsto I_\rho$, $\rho>0$, with values in $\NN$, such that
    \begin{enumerate}[(i)]
      \item  $\PPP{\Delta_\rho\in\{I_\rho,I_\rho+1\}}\conv[\rho]{\infty}1$;
      \item $I_\rho\eq[\rho]{\infty}\frac{1}{2}\cdot \frac{\log \rho}{\log\log \rho}$. 
    \end{enumerate}
  \end{Th}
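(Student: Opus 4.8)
The plan is to reduce the statement to a sharp estimate on the degree of the typical vertex and then to translate it into a statement about the maximum by first- and second-moment arguments. Write $p_n := \PPP{d_\XX(0)\geq n}$ for the Palm probability that the vertex at the origin has degree at least $n$, and set $N_n := \#\{x\in\XX\cap\window : d_\XX(x)\geq n\}$. Since $\XX$ has intensity $1$ and $\window$ has area $\rho$, the Mecke equation gives $\EEE{N_n}=\rho\, p_n$. The whole theorem hinges on the asymptotics of $p_n$: I would first prove that
\[
 -\log p_n = 2\,n\log n\,(1+o(1)) \qquad (n\to\infty),
\]
together with the sharper consequence that the one-step ratio satisfies $p_{n+1}/p_n = n^{-2(1+o(1))}\to 0$. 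Granting this, define $I_\rho:=\max\{n\in\NN : \rho\,p_n\geq 1\}$; inverting the tail asymptotics (using $\log I_\rho\sim\log\log\rho$) yields $I_\rho\eq[\rho]{\infty}\tfrac12\cdot\frac{\log\rho}{\log\log\rho}$, which is item (ii).

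For the tail estimate itself --- the main obstacle --- I would express $p_n$ through integral geometry. A vertex at $0$ of degree $m$ is surrounded by $m$ Delaunay triangles $(0,x_i,x_{i+1})$ whose circumdisks $D_i$ are empty of $\XX$; conditioning on $0\in\XX$ and applying the multivariate Mecke formula, $\PPP{d_\XX(0)=m}$ becomes an integral over cyclically ordered configurations $x_1,\dots,x_m$ of the weight $\exp(-|\bigcup_i D_i|)$, restricted by the hard constraint that each $D_i$ contain no other $x_j$. The decisive point is that this empty-circumdisk constraint, rather than the area weight alone, is what suppresses the phase space: it forces the neighbours into near-convex position at comparable radii, and the resulting constrained volume contributes the dominant $-2\,n\log n$ rate. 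Carrying out the associated variational problem and controlling the subdominant configurations --- in particular ruling out contributions from vertices with very large or very spread-out flowers --- is where the real work lies.

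With the tail in hand, the upper bound on $\Delta_\rho$ is immediate from the first moment. By Markov's inequality,
\[
 \PPP{\Delta_\rho\geq I_\rho+2}\leq \EEE{N_{I_\rho+2}} = \rho\,p_{I_\rho+2} = (\rho\,p_{I_\rho+1})\cdot\frac{p_{I_\rho+2}}{p_{I_\rho+1}} \leq 1\cdot (I_\rho+1)^{-2(1+o(1))}\conv[\rho]{\infty}0,
\]
where we used $\rho\,p_{I_\rho+1}<1$ by the definition of $I_\rho$ and the ratio bound with index $I_\rho+1\to\infty$. Hence $\Delta_\rho\leq I_\rho+1$ with probability tending to $1$.

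The lower bound $\PPP{\Delta_\rho\geq I_\rho}\conv[\rho]{\infty}1$, i.e.\ $\PPP{N_{I_\rho}=0}\to 0$, is the more delicate half. Because the one-step ratio drops by a factor $\asymp(\log\rho)^2$, the index $I_\rho$ can be chosen so that simultaneously $\EEE{N_{I_\rho}}\to\infty$ and $\EEE{N_{I_\rho+2}}\to 0$; the possibility that the intermediate value $\EEE{N_{I_\rho+1}}$ is of moderate size is exactly what forces the two-point window $\{I_\rho,I_\rho+1\}$ in (i). To turn $\EEE{N_{I_\rho}}\to\infty$ into $\PPP{N_{I_\rho}=0}\to 0$, I would run a Chen--Stein Poisson approximation (equivalently a second-moment argument) for $N_{I_\rho}$. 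The key geometric input is localization: conditionally on having degree at least $n$, the flower around a vertex has diameter only $O(\mathrm{polylog}\,\rho)$ with overwhelming probability, so the events $\{d_\XX(x)\geq I_\rho\}$ and $\{d_\XX(y)\geq I_\rho\}$ are driven by $\XX$ on disjoint regions once $x,y$ are far apart and are therefore asymptotically independent. Bounding the resulting dependency graph and showing that the nearby-pair contribution to the second factorial moment is negligible yields $N_{I_\rho}\approx \mathrm{Poisson}(\rho\,p_{I_\rho})$, whence $\PPP{N_{I_\rho}=0}\approx e^{-\rho\,p_{I_\rho}}\to 0$. Combining the two bounds gives $\PPP{\Delta_\rho\in\{I_\rho,I_\rho+1\}}\to 1$, which is item (i).
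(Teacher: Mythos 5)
Your overall skeleton (tail asymptotics for the typical degree, first moment for the upper bound, control of dependence for the lower bound) parallels the paper, and your upper bound $\PPP{\Delta_\rho\geq I_\rho+2}\conv[\rho]{\infty}0$ is essentially the paper's argument. The genuine gap is in the lower bound, precisely at the step you dispose of in one sentence: ``the nearby-pair contribution to the second factorial moment is negligible.'' Flower localization cannot justify this, because nearby pairs lie inside each other's dependency range by definition; what you actually need is a quantitative bound showing that, for $x,y$ at bounded distance, $\PPP{d_\XX(x)\geq k,\, d_\XX(y)\geq k}$ is smaller than $\PPP{\typ\geq k}$ by a factor beating every power of $\log\rho$ (your second-moment inequality requires the nearby-pair sum, of order $\rho\cdot\mathrm{polylog}(\rho)\cdot q_k$, to be $o\left((\rho p_k)^2\right)$, while $\rho p_{I_\rho}$ itself grows more slowly than $\log\rho$). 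No such bound is offered, and it does not follow from any ``disjoint clouds'' heuristic: $K_{2,m}$ is planar, and two nearby Delaunay vertices can share all but $O(1)$ of their neighbours (two poles sandwiching a long chain of points is a legitimate Delaunay configuration), so the joint event can be realized with roughly $k$ points rather than $2k$, and whether its probability is $o\left(p_k/\mathrm{polylog}(\rho)\right)$ is a delicate constants-level question. This is exactly the difficulty the paper is built to avoid: since $K_{3,3}$ and $K_5$ are not planar, among any \emph{five} pairwise-close vertices some pair has at most $23$ common neighbours (Lemma 15 and Proposition 14), which genuinely forces a second, nearly disjoint cloud of about $k/23$ points and yields the rigorous clustering bound $\PPP{N_\XX^B[k]\geq 5}\leq\left(c_4\vol{2}{B}k^{-2}+c_5^k\vol{2}{B}^2k^{-2k/23}\right)\PPP{\typ=k}$ (Proposition 10). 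The paper then avoids Chen--Stein altogether: ``at most four exceedances per block'' is converted into the block lower bound $\PPP{M^B_\XX\geq k}\geq\frac{\vol{2}{B}}{5}\PPP{\typ\geq k}$ (Proposition 11, via the union bound of Lemma 13), and independence of well-separated blocks conditionally on $\mathscr{E}_\rho$ (Lemma 9) finishes the proof. For pairs --- the case your argument rests on --- no analogue of Proposition 10 is proved anywhere, and it may simply be false.

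A second, smaller flaw: the definition $I_\rho=\max\{n\in\NN : \rho p_n\geq 1\}$ cannot deliver two-point concentration. It guarantees only $\rho p_{I_\rho}\geq 1$; along scales with $\rho=1/p_n$ one has $\rho p_{I_\rho}=1$, and then even an exact Poisson approximation gives $\PPP{N_{I_\rho}=0}\to e^{-1}$, not $0$. Your later remark that $I_\rho$ ``can be chosen'' so that $\EEE{N_{I_\rho}}\to\infty$ and $\EEE{N_{I_\rho+2}}\to 0$ is true, but not of the $I_\rho$ you defined: one needs Anderson's rounding, as in the paper --- interpolate $-\log\PPP{\typ>k}$ linearly to get a continuous decreasing tail $G_c$ and set $I_\rho=\left\lfloor G_c^{-1}(1/\rho)+\frac{1}{2}\right\rfloor$; the half-shift combined with $p_{n+1}/p_n\to 0$ then gives both limits simultaneously (Lemma 8). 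With your definition, even granting all your other claims, you would only obtain concentration on the three values $\{I_\rho-1,I_\rho,I_\rho+1\}$.
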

In particular, our result provides the exact order of the maximal degree  and claims that, with high probability, the maximal degree is concentrated on two consecutive values. 
As observed in Figure \ref{fig:Maxdegree}, the concentration is already visible for $\rho =10^6$. On the other hand, the estimate of $I_\rho$ is good only for much larger values of $\rho$ because of the extremely slow growth of the logarithm. This will be discussed further at the end of Section \ref{sec:typicaldegree}.

\begin{figure}
\centering
\begin{minipage}[b]{.41\textwidth}
  \centering
    \includegraphics[scale=0.45]{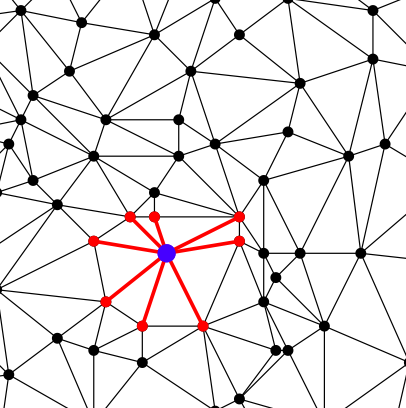}
    \caption{\label{fig:del}A node (blue) with its neighbors (red) which maximizes the degree in a planar Delaunay graph observed in a window.}
    \vspace{1Em}
\end{minipage}%
\hfill%
\begin{minipage}[b]{.51\textwidth}
  \centering
  \pgfplotstableread[row sep=\\,col sep=&]{degree & proba \\    13 & 0 \\   14 & 0.02  \\  15 & 0.655 \\   16 & 0.29 \\  17 & 0.03 \\ 18 & 0.002 \\  19 & 0 \\  }\mydata
    \begin{tikzpicture}[scale=0.95]    
        \begin{axis}[
            axis lines=left,
            ybar,
            bar width=.5cm,
            symbolic x coords={13,14,15,16,17,18,19},
            xtick=data,
            ymin=0,
            ymax=1,
            ylabel={Empirical Probability},
            xlabel={Maximal Degree} ]
            \addplot table[x=degree,y=proba]{\mydata}; 
        \end{axis}
    \end{tikzpicture}
    \caption{\label{fig:Maxdegree}Empirical distribution of $\Delta_\rho$,  based on 75000 simulations, of the maximal degree in a planar Poisson-Delaunay graph observed in the window $\mathbf{W}_{10^6}=10^3[0,1]^2$.}
\end{minipage}
\end{figure}

 Our theorem is rather classical in the sense that similar results have already been established in the context of  random combinatorial graphs \cite{Bo,CGS,DGNPS,GW,GMN,MR,MSW2} and Gilbert graph \cite[Th 6.6]{Pr}.   Besides, Anderson \cite{And} proved that the maximum of $n$ independent and identically distributed random variables  is concentrated, with high probability as $n$ goes to infinity,  on two consecutive integers for a wide class of discrete random variables. Kimber \cite{Kim} provided rates of convergence in the particular case  where the random variables are Poisson distributed.  However, two difficulties are added in the context of Poisson-Delaunay graphs. The first one is that the distribution of the typical degree cannot be made explicit. The second one, which constitutes the main difficulty, comes from the dependence between the degrees of the nodes and the geometric constraints in the Poisson-Delaunay graph. 
 
  As a corollary of Theorem \ref{Th:maxdegree}, we can find arbitrary large windows for which the maximal degree is concentrated on only one integer with high probability. 
 
 \begin{Cor}
 \label{cor:onevalue}
Let $\Delta_\rho$ be the maximal degree in  a planar Poisson-Delaunay graph over all nodes in $\window=\rho^{1/2}[0,1]^2$. Then there exists an increasing sequence $(\rho_i)$ converging to infinity  such that \[\PPP{\Delta_{\rho_i} = i}\conv[i]{\infty}1.\]
 \end{Cor}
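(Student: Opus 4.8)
The plan is to turn the two-point concentration of Theorem~\ref{Th:maxdegree} into one-point concentration by varying the window size, exploiting a monotonicity hidden in the definition of $\Delta_\rho$. Indeed, the Delaunay graph is built on the whole process $\XX$ and the windows $\window=\rho^{1/2}[0,1]^2$ are nested in $\rho$; enlarging $\rho$ therefore only enlarges the set of nodes over which the maximum is taken, so $\rho\mapsto\Delta_\rho$ is pathwise non-decreasing. Consequently, for each fixed integer $i$ the function $a_i(\rho):=\PPP{\Delta_\rho\le i}$ is non-increasing. First I would record three soft properties of $a_i$. It is continuous, since for $\rho<\rho'$ the inclusion $\{\Delta_{\rho'}\le i\}\subseteq\{\Delta_\rho\le i\}$ yields $0\le a_i(\rho)-a_i(\rho')\le\PPP{\XX\cap(\mathbf{W}_{\rho'}\setminus\window)\neq\emptyset}\le\mathrm{V}_2(\mathbf{W}_{\rho'}\setminus\window)=\rho'-\rho$, which vanishes as $\rho'\downarrow\rho$ and as $\rho\uparrow\rho'$. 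It satisfies $a_i(\rho)\to1$ as $\rho\to0^+$, because the window is then empty with probability $e^{-\rho}\to1$. Finally $a_i(\rho)\to0$ as $\rho\to\infty$: by Theorem~\ref{Th:maxdegree}(ii) we have $I_\rho\to\infty$, so eventually $I_\rho\ge i+1$ and $a_i(\rho)\le\PPP{\Delta_\rho\le I_\rho-1}\to0$ by part~(i).

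Fix $\epsilon\in(0,1/4)$. By Theorem~\ref{Th:maxdegree}(i) there is $\rho_0$ such that $\PPP{\Delta_\rho\le I_\rho-1}<\epsilon$ and $\PPP{\Delta_\rho\ge I_\rho+2}<\epsilon$ for every $\rho\ge\rho_0$. For each $i$ large enough that $a_i(\rho_0)>1-\epsilon$ (possible since $\Delta_{\rho_0}$ is a.s.\ finite), monotonicity gives $a_i>1-\epsilon$ on $(0,\rho_0]$, so continuity and the intermediate value theorem provide $\rho_i>\rho_0$ with $a_i(\rho_i)=1-2\epsilon$. The decisive step is to identify $I_{\rho_i}$. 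If $I_{\rho_i}\ge i+1$, then $1-2\epsilon=a_i(\rho_i)\le\PPP{\Delta_{\rho_i}\le I_{\rho_i}-1}<\epsilon$, which is absurd; if $I_{\rho_i}\le i-1$, then $2\epsilon=1-a_i(\rho_i)=\PPP{\Delta_{\rho_i}\ge i+1}\le\PPP{\Delta_{\rho_i}\ge I_{\rho_i}+2}<\epsilon$, again absurd. Hence $I_{\rho_i}=i$, so $\PPP{\Delta_{\rho_i}\le i-1}=\PPP{\Delta_{\rho_i}\le I_{\rho_i}-1}<\epsilon$ and
\[\PPP{\Delta_{\rho_i}=i}=a_i(\rho_i)-\PPP{\Delta_{\rho_i}\le i-1}>(1-2\epsilon)-\epsilon=1-3\epsilon.\]
As $\epsilon$ is arbitrary, this shows $\sup_{\rho>0}\PPP{\Delta_\rho=i}\conv[i]{\infty}1$.

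To assemble the sequence, I would choose for each large $i$ a point $\rho_i$ with $\PPP{\Delta_{\rho_i}=i}\ge\sup_{\rho>0}\PPP{\Delta_\rho=i}-1/i$, so that $\PPP{\Delta_{\rho_i}=i}\conv[i]{\infty}1$; for the finitely many remaining $i$, set $\rho_i$ to be arbitrarily small. For large $i$ the concentration exceeds $1/2$, whence $a_{i-1}(\rho_i)<1/2<a_i(\rho_i)$. Given two large indices $i<j$ one has $a_{j-1}(\rho_i)\ge a_i(\rho_i)>1/2>a_{j-1}(\rho_j)$, so the monotonicity of $a_{j-1}$ forces $\rho_i<\rho_j$; thus $(\rho_i)$ is strictly increasing. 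It diverges to infinity, for if $\rho_i\le M$ along a subsequence then $\PPP{\Delta_{\rho_i}=i}\le1-a_{i-1}(M)\to0$, contradicting the convergence to $1$. This produces the increasing sequence converging to infinity required by the statement.

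The main difficulty is conceptual rather than computational: the whole argument rests on noticing that the pathwise monotonicity of $\rho\mapsto\Delta_\rho$ makes $a_i$ a continuous monotone function, and on calibrating $\rho_i$ at the precise level $1-2\epsilon$ so that the two admissible values $I_{\rho_i}$ and $I_{\rho_i}+1$ of Theorem~\ref{Th:maxdegree}(i) are squeezed onto the single value $i$. Verifying the continuity of $a_i$ and, above all, checking that the chosen $\rho_i$ satisfies $I_{\rho_i}=i$ rather than $i\pm1$ are the delicate points; once these are in place, the extraction of the increasing sequence is routine.
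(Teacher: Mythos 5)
Your proof is correct, but it takes a genuinely different route from the paper. The paper works inside its own machinery: from the explicit definition \eqref{def:In} of $I_\rho$ it observes that the level sets $D_i=\{\rho \mid I_\rho=i\}$ are bounded, left-closed, right-open intervals partitioning $\RR_+$, takes $\rho_i=\min D_i$ (so that $I_{\rho_i}=i$ by construction), and then eliminates the upper value $i+1$ by the first-moment bound $\PPP{\Delta_{\rho_i}\geq I_{\rho_i}+1}\leq \rho_i\PPP{\typ\geq I_{\rho_i}+1}$ together with Lemma \ref{Le:tail}~(i) applied just to the left of $\rho_i$, where $I_\rho$ still equals $i-1$. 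You instead treat Theorem \ref{Th:maxdegree} as a black box and supply three soft structural facts: pathwise monotonicity of $\rho\mapsto\Delta_\rho$ (nested windows, graph built on all of $\XX$), Lipschitz continuity of $a_i(\rho)=\PPP{\Delta_\rho\leq i}$, and a.s.\ finiteness of $\Delta_\rho$; the intermediate-value calibration $a_i(\rho_i)=1-2\epsilon$ then squeezes $I_{\rho_i}$ onto the single value $i$, and the extraction of an increasing divergent sequence from near-maximizers of $\sup_\rho\PPP{\Delta_\rho=i}$ is handled cleanly by your $1/2$-threshold argument. What each approach buys: the paper's proof is shorter and essentially constructive (its $\rho_i$ is explicitly $1/G_c(i-\tfrac12)$), but it needs access to the internal tail estimates for the typical degree; yours is longer and non-constructive, but more robust and portable --- it would prove the analogous corollary for any pathwise monotone family with continuous marginals satisfying a two-point concentration theorem, with no information about how $I_\rho$ was built. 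Both are complete; the only cosmetic blemishes in yours are the reuse of the symbol $\rho_i$ for two different objects (the $\epsilon$-calibrated points and the final assembled sequence) and the need to say explicitly that the finitely many initial terms are chosen increasing and below the first large-index term.
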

 
 A weaker version of Theorem \ref{Th:maxdegree}, which deals with the general case $d\geq 2$, is stated below. 
 
 \begin{Th}
 \label{Th:maxdegreed}
 Let $\Delta_\rho$ be the maximal degree in  a Poisson-Delaunay graph over all nodes in $\window=\rho^{1/d}[0,1]^d$, $d\geq 2$. Then there exists a deterministic function $\rho\mapsto J_\rho$, $\rho>0$, with values in $\NN$, such that
    \begin{enumerate}[(i)]
      \item  $\PPP{\Delta_\rho\in\{J_\rho, J_\rho+1, \ldots, J_\rho+l_d\}}\conv[\rho]{\infty}1$, where $l_d = \left\lfloor\frac{d+3}{2} \right\rfloor$;
      \item $J_\rho\eq[\rho]{\infty}\frac{d-1}{2}\cdot \frac{\log \rho}{\log\log \rho}$. 
    \end{enumerate}
 \end{Th}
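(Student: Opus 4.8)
The plan is to reduce the study of $\Delta_\rho$ to a sufficiently precise estimate of the tail of the \emph{typical} degree and then run a first/second moment argument over the window. For $n\in\NN$ set $\tau_n:=\PPP{d_{\XX\cup\{0\}}(0)\geq n}$, the Palm probability that a point added at the origin has degree at least $n$. Writing $N_n:=\sum_{x\in\XX\cap\window}\ind{d_\XX(x)\geq n}$ for the number of exceedances in the window, so that $\{\Delta_\rho\geq n\}=\{N_n\geq 1\}$, the Mecke equation together with the stationarity of $\XX$ and $|\window|=\rho$ gives $\EEE{N_n}=\rho\,\tau_n$ (no boundary truncation occurs, since the degree is computed in the full process). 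The whole theorem then reduces to locating the threshold $n$ at which $\rho\tau_n$ passes from $+\infty$ to $0$: I would define $J_\rho$ as essentially the largest integer $n$ with $\rho\tau_n\to\infty$, and then show that $\Delta_\rho$ cannot exceed $J_\rho+l_d$ by a first moment bound while it is at least $J_\rho$ by a second moment bound.

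The main obstacle, and the heart of the proof, is a two-sided estimate of $\tau_n$ precise enough that the transition window has bounded width. The expected shape, consistent with (ii), is
\[
-\log\tau_n=\frac{2}{d-1}\,n\log n+c_d\,n+O(\log n),
\]
the super-exponential (factorial-type) decay reflecting that forcing the origin to have $n$ Delaunay neighbours costs, up to constants, a factor $(n!)^{-2/(d-1)}$: one must place $n$ points in prescribed relative positions around $0$ with the circumscribed balls of the associated Delaunay simplices empty, and the resulting normalisation produces the $n!$. I would obtain the upper bound on $\tau_n$ from a union bound over candidate neighbour tuples together with Poisson void probabilities for the corresponding empty balls, and the matching lower bound by exhibiting an explicit cheap family of degree-$n$ configurations and bounding its probability from below. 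The delicate point is that the leading coefficient $\tfrac{2}{d-1}$ \emph{and} the linear coefficient $c_d$ must agree in the two bounds: any mismatch at the linear order would leave a gap of size $O(n)$ in $-\log\tau_n$, producing a window of width $O(n/\log\log\rho)\to\infty$. Once these coefficients are matched, the only genuine slack is the logarithmic term, and it is precisely the size of this residual $O(\log n)$ slack, measured against the per-step change $\log(\tau_n/\tau_{n+1})\sim\frac{2}{d-1}\log\log\rho$ at the relevant scale $n\asymp\log\rho/\log\log\rho$, that fixes the number $l_d=\lfloor(d+3)/2\rfloor$ of integers the window can span. (In $d=2$, Theorem~\ref{Th:maxdegree} sharpens this slack to a window of width two; Theorem~\ref{Th:maxdegreed} only needs the cruder version, available in every dimension.)

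With the tail estimate in hand the upper bound is immediate: by Markov's inequality $\PPP{\Delta_\rho\geq n}\leq\EEE{N_n}=\rho\tau_n$, so taking $n=J_\rho+l_d+1$, for which $\rho\tau_n\to0$, gives $\PPP{\Delta_\rho\geq J_\rho+l_d+1}\to0$. The lower bound $\PPP{\Delta_\rho\geq J_\rho}\to1$ is where the dependence between the degrees enters, and is the second difficulty. I would control it through the second moment, $\PPP{N_{J_\rho}=0}\leq \operatorname{Var}(N_{J_\rho})/\EEE{N_{J_\rho}}^2$, estimating the variance via the two-point Mecke formula: the covariance of the exceedance indicators at $x$ and $x'$ is negligible once $|x-x'|$ is large, because the Delaunay degree is a stabilising functional, its value at $x$ depending only on $\XX$ in a random neighbourhood of $x$ whose radius has an exponentially decaying tail. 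Summing these short-range correlations yields $\operatorname{Var}(N_{J_\rho})=o\big(\EEE{N_{J_\rho}}^2\big)$ as soon as $\EEE{N_{J_\rho}}=\rho\tau_{J_\rho}\to\infty$, whence $\PPP{N_{J_\rho}=0}\to0$.

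Combining the two bounds yields (i), and the asymptotics (ii) follow by inverting the threshold relation $\rho\tau_{J_\rho}\asymp1$: substituting the estimate of $\tau_n$ and using $\log J_\rho\sim\log\log\rho$ gives $\log\rho\sim\frac{2}{d-1}J_\rho\log\log\rho$, i.e. $J_\rho\sim\frac{d-1}{2}\cdot\frac{\log\rho}{\log\log\rho}$. I expect the geometric tail estimate to be by far the most demanding step, both because the extremal high-degree configuration must be pinned down precisely enough to fix the two leading coefficients and because the emptiness constraints couple the neighbours; the stabilisation argument for the variance, while technical, is comparatively standard for local functionals of Poisson-Delaunay graphs.
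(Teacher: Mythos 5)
Your upper bound (first moment) and the implicit definition of the threshold match the paper's strategy, but the lower bound contains a genuine gap: the claim that $\operatorname{Var}(N_{J_\rho})=o\bigl(\EEE{N_{J_\rho}}^2\bigr)$ ``as soon as $\EEE{N_{J_\rho}}=\rho\tau_{J_\rho}\to\infty$'' does not follow from stabilization. Stabilization decorrelates the exceedance indicators at $x$ and $x'$ only up to an \emph{additive} error $\approx e^{-c|x-x'|^d}$, and this error must be compared with $\tau_n^2\asymp\rho^{-2}$; hence it is useful only for $|x-x'|\gtrsim(\log\rho)^{1/d}$. Every point of $\window$ has a region of volume $\asymp\log\rho$ of partners closer than this, and for such close pairs the only cheap bound is $\PPP{d_{\XX\cup\{x,x'\}}(x)\geq n,\,d_{\XX\cup\{x,x'\}}(x')\geq n}\leq\tau_{n-1}\leq c\,n^{2/(d-1)}\tau_n$ (adding one point raises a given degree by at most one, then Proposition~\ref{Prop:Gilles}). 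The close pairs can therefore contribute up to order $\EEE{N_n}\,(\log\rho)\,n^{2/(d-1)}$ to the variance, so Chebyshev requires $\rho\tau_n\gg(\log\rho)^{1+2/(d-1)}$, not merely $\rho\tau_n\to\infty$; with your $J_\rho$ the quantity $\rho\tau_{J_\rho}$ may grow arbitrarily slowly and the argument gives nothing. Repairing it forces you to run the second moment at $n=J_\rho-l$, where each step down gains a factor $\asymp(\log\rho/\log\log\rho)^{2/(d-1)}$ (Lemma~\ref{Le:tail}~\eqref{Le:tail3}); counting how many steps are needed to beat the polylogarithmic loss is exactly where $l_d=\lfloor(d+3)/2\rfloor$ comes from in the paper (there the loss is a factor $\log\rho$ from blocks of volume $\asymp\alpha\log\rho$, and the requirement is $2(l_d-1)/(d-1)>1$). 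Your own accounting of $l_d$ --- the $O(\log n)$ slack in the tail estimate measured against the per-step change --- is not the operative mechanism. As a sanity check: if your variance claim held, you would obtain a \emph{two}-value concentration window in every dimension, which is stronger than Theorem~\ref{Th:maxdegree} and is precisely what the paper identifies as the open difficulty for $d\geq3$; the width-two result is obtained only for $d=2$, via the planar-graph estimates of Propositions~\ref{Prop:cluster5} and~\ref{Prop:maxblock}, because nearby nodes with simultaneously large degree are exactly what soft decorrelation arguments cannot see.

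Two further remarks. First, demanding the two-sided estimate $-\log\tau_n=\frac{2}{d-1}n\log n+c_d\,n+O(\log n)$ with \emph{matched} linear coefficient is both out of reach for $d\geq3$ (Proposition~\ref{Prop:Gilles} gives only $c_1^n n^{-2n/(d-1)}\leq\tau_n\leq c_3^n n^{-2n/(d-1)}$ with $c_1\neq c_3$; the refinement \eqref{eq:hilhorst} is special to the plane) and unnecessary: the paper defines the threshold implicitly through the true tail, $I_\rho=\lfloor G_c^{-1}(1/\rho)+\tfrac12\rfloor$ as in \eqref{def:In} (Anderson's device), so the crude two-sided bound suffices for the asymptotics (ii) (an $O(n)$ error in $-\log\tau_n$ shifts the threshold by $O(n/\log n)=o(n)$), while the window width is governed solely by the one-step ratio $\PPP{\typ=k}\leq c_2\,k^{-2/(d-1)}\PPP{\typ=k-1}$. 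Second, the paper's treatment of dependence is not a second moment bound: it partitions $\window$ into sub-cubes of volume $\asymp\alpha\log\rho$, makes block maxima exactly independent conditionally on every block containing a point of $\XX$ (Lemma~\ref{Le:Arho}), and lower-bounds each block's exceedance probability through Proposition~\ref{Prop:maxblockd}. Both routes face the same short-range obstruction and pay for it with the widened window $l_d+1$; your proposal hides this cost rather than paying it, while making the tail-estimate step harder than it needs to be.
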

 In particular, when $d=2$, the above result claims that the maximal degree is concentrated on three consecutive values, which is less accurate than Theorem \ref{Th:maxdegree}. When  $d=3$ and $d=4$, this also shows that the maximal degree is concentrated on four consecutive values.   

  Although Theorem \ref{Th:maxdegree} only deals with the two dimensional case,  its proof is significantly more difficult than the one of Theorem  \ref{Th:maxdegreed}.  Indeed, as opposed to Theorem \ref{Th:maxdegree}, we think that Theorem  \ref{Th:maxdegreed} is not optimal in the sense that the maximal degree should also be concentrated on two consecutive integers, and not only on $l_d+1$ integers. The proof of Theorem \ref{Th:maxdegree} extensively uses the fact that the graph is planar. In particular, as an intermediate result to derive Theorem \ref{Th:maxdegree}, we prove that there is no family of five nodes in the Poisson-Delaunay graph which are close to each others and such that their degrees simultaneously exceed $I_\rho$ with high probability. Such a result is essential in our proof and is specific to the two dimensional case.
  
  As a consequence of Theorem \ref{Th:maxdegreed}, the following corollary improves the estimate \eqref{eq:bose}.
 
 \begin{Cor}
 \label{cor:eq}
 Let $\Delta_\rho$ be the maximal degree in a Poisson-Delaunay graph over all nodes in $\window=\rho^{1/d}[0,1]^d$, $d\geq 2$.  Then  $\EEE{\Delta_\rho }\eq[\rho]{\infty}\frac{d-1}{2}\cdot \frac{\log \rho}{\log\log \rho}$.
 \end{Cor}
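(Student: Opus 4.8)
The plan is to deduce Corollary~\ref{cor:eq} from Theorem~\ref{Th:maxdegreed} by combining the concentration statement with a control of the contribution of the rare large-degree events to the expectation. Write $J_\rho$ for the sequence provided by Theorem~\ref{Th:maxdegreed} and set $m_\rho := J_\rho + l_d$, so that $\PPP{\Delta_\rho \leq m_\rho}\conv[\rho]{\infty}1$ and $\PPP{\Delta_\rho \geq J_\rho}\conv[\rho]{\infty}1$. Since $l_d$ is a fixed constant and $J_\rho\eq[\rho]{\infty}\frac{d-1}{2}\cdot\frac{\log\rho}{\log\log\rho}$, both $J_\rho$ and $m_\rho$ are asymptotically equivalent to $\frac{d-1}{2}\cdot\frac{\log\rho}{\log\log\rho}$. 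The strategy is therefore to sandwich $\EEE{\Delta_\rho}$ between two quantities, each equivalent to this target.

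For the lower bound, I would write $\EEE{\Delta_\rho}\geq J_\rho\,\PPP{\Delta_\rho\geq J_\rho}$, which immediately yields $\liminf \frac{\EEE{\Delta_\rho}}{J_\rho}\geq 1$ because the probability tends to $1$ and $\Delta_\rho\geq 0$. For the upper bound I would split according to whether the good event $\{\Delta_\rho\leq m_\rho\}$ holds:
\begin{equation*}
\EEE{\Delta_\rho} \leq m_\rho + \EEE{\Delta_\rho\,\ind{\Delta_\rho > m_\rho}}.
\end{equation*}
The first term is already equivalent to the target, so everything reduces to showing that the residual term $\EEE{\Delta_\rho\,\ind{\Delta_\rho > m_\rho}}$ is negligible compared to $\frac{\log\rho}{\log\log\rho}$.

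The main obstacle is precisely this tail estimate: the concentration result in Theorem~\ref{Th:maxdegreed} only tells us that the probability of $\{\Delta_\rho > m_\rho\}$ is small, but controlling the expectation requires a quantitative bound on how fast $\PPP{\Delta_\rho > k}$ decays for $k$ beyond $m_\rho$, together with the knowledge that $\Delta_\rho$ cannot be enormously large. To handle this I would revisit the upper-bound machinery underlying Theorem~\ref{Th:maxdegreed}: the degree of a fixed node exceeding $k$ forces a geometric configuration whose probability decays super-exponentially in $k\log k$, and a union bound over the $O(\rho)$ nodes in $\window$ gives a tail of the form $\PPP{\Delta_\rho > k}\leq \rho\, e^{-c\,k\log k}$ for $k$ large. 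Summing $\PPP{\Delta_\rho>k}$ over $k>m_\rho$ (using $\EEE{\Delta_\rho\,\ind{\Delta_\rho>m_\rho}}\leq m_\rho\,\PPP{\Delta_\rho>m_\rho}+\sum_{k>m_\rho}\PPP{\Delta_\rho>k}$) then shows the residual term is not merely $o\!\left(\frac{\log\rho}{\log\log\rho}\right)$ but in fact tends to $0$, since at $k\approx m_\rho$ the factor $\rho$ is already overwhelmed by $e^{-c\,k\log k}$.

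Putting the two bounds together gives $J_\rho(1+o(1))\leq \EEE{\Delta_\rho}\leq m_\rho + o(1)$, and since both $J_\rho$ and $m_\rho$ are equivalent to $\frac{d-1}{2}\cdot\frac{\log\rho}{\log\log\rho}$, the squeeze yields $\EEE{\Delta_\rho}\eq[\rho]{\infty}\frac{d-1}{2}\cdot\frac{\log\rho}{\log\log\rho}$, as claimed. The only genuinely new input beyond Theorem~\ref{Th:maxdegreed} is the summable tail bound, which I expect to extract from the same single-node degree estimate already used in the proof of that theorem.
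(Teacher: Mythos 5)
Your skeleton is essentially the paper's: isolate the concentration window from Theorem \ref{Th:maxdegreed}, and control the upper tail of $\EEE{\Delta_\rho}$ by the union bound $\PPP{\Delta_\rho\geq k}\leq \rho\,\PPP{\typ\geq k}$ combined with the decay of the typical degree. (Your lower bound $\EEE{\Delta_\rho}\geq J_\rho\,\PPP{\Delta_\rho\geq J_\rho}$ is in fact a small simplification: the paper instead bounds $\sum_{k\leq I_\rho-l_d}k\,\PPP{\Delta_\rho=k}$ by $\tfrac{I_\rho^2}{2}\PPP{\Delta_\rho\leq I_\rho-l_d}$, which forces it to reuse the quantitative exponential estimates \eqref{eq:ineqmaxdegreed1}--\eqref{eq:ineqmaxdegreed2}, whereas your version needs only the convergence in probability.) The genuine gap is in the step you call the ``only genuinely new input'': the claim that at $k\approx m_\rho$ the factor $\rho$ is already overwhelmed by $e^{-c\,k\log k}$. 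That claim is quantitatively false with the bounds you invoke. Proposition \ref{Prop:Gilles} gives $\PPP{\typ\geq k}\leq c_3^k\,k^{-\frac{2}{d-1}k}=\exp\left(-\tfrac{2}{d-1}k\log k+O(k)\right)$, and $m_\rho\sim\frac{d-1}{2}\cdot\frac{\log\rho}{\log\log\rho}$, so that
\[
\frac{2}{d-1}\,m_\rho\log m_\rho \;=\; \log\rho \;-\; \frac{\log\rho\,\log\log\log\rho}{\log\log\rho}\,(1+o(1)),
\]
and hence the best this crude bound can give is $\rho\,\PPP{\typ\geq m_\rho+1}\leq \exp\left(\frac{\log\rho\,\log\log\log\rho}{\log\log\rho}(1+o(1))\right)$, a quantity that diverges — indeed faster than the target $\frac{\log\rho}{\log\log\rho}$ itself. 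The deeper reason is that the first-order asymptotics of $m_\rho$ carries a relative uncertainty of order $\frac{\log\log\log\rho}{\log\log\rho}$, which after exponentiation produces a factor $e^{\pm\Theta(\log\rho\,\log\log\log\rho/\log\log\rho)}$ swamping $\rho$; no argument using only $m_\rho\sim\frac{d-1}{2}\frac{\log\rho}{\log\log\rho}$ together with two-sided bounds of the form $c^k k^{-\frac{2}{d-1}k}$ can decide whether $\rho\,\PPP{\typ\geq m_\rho+1}$ is small or large.

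What makes the tail sum work — and what the paper uses — is the exact calibration of the threshold, not its asymptotics: $m_\rho+1=J_\rho+l_d+1=I_\rho+2$ with $I_\rho=\left\lfloor G_c^{-1}(1/\rho)+\frac12\right\rfloor$, so that Lemma \ref{Le:tail} \eqref{Le:tail1} (Anderson's ratio argument, resting on \eqref{eq:rateG}, i.e.\ on the definition of $I_\rho$ through $G_c^{-1}$) gives $\rho\,\PPP{\typ\geq I_\rho+2}\conv[\rho]{\infty}0$. Combining this with the recurrence of Proposition \ref{Prop:Gilles} (i), which shows that the series $\sum_{k\geq I_\rho+2}\rho\,\PPP{\typ\geq k}$ is dominated by a constant times its first term, your residual term indeed tends to $0$ and the sandwich closes. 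This is exactly how the paper argues: it bounds $(I_\rho+d)\,\rho\,\PPP{\typ\geq I_\rho+d+1}\leq c\,I_\rho^{-1}\rho\,\PPP{\typ=I_\rho+2}$ via Proposition \ref{Prop:Gilles} and then applies Lemma \ref{Le:tail} \eqref{Le:tail1}, and it treats the remaining series the same way. So your proof is repairable entirely within the paper's toolbox, but as written the decisive step rests on a false quantitative claim rather than on Lemma \ref{Le:tail}; in particular, the corollary cannot be deduced from the statement of Theorem \ref{Th:maxdegreed} plus crude degree-tail bounds alone.
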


The paper is organized as follows. In Section \ref{sec:preliminaries}, we give several preliminaries by introducing some notation and by recalling a few known results. In Section \ref{sec:intermediateresults}, we present technical lemmas which will be used to derive Theorems \ref{Th:maxdegree} and \ref{Th:maxdegreed}. In Section \ref{sec:maintheorem}, we prove our main theorems and their corollaries. The proofs of the technical lemmas are given in Section \ref{sec:lemmas}.

    \section{Preliminaries}
  \label{sec:preliminaries}
  
\subsection{Notation}
\label{sec:notation}
We summarize here the notation used throughout the text.

\paragraph{General notation}
We denote by $\NN = \{ 1 , 2 ,\ldots \} $ and $\RR_+ = [0,\infty) $ the sets of positive integers and non-negative numbers, respectively. The $d$-dimensional Euclidean space $ \RR^d $ is endowed with the Euclidean norm $\|\cdot\|$ and with its $d$-dimensional Lebesgue measure $\vol{d}{\cdot}$. We denote by $\mathcal{B}_{+}^{d}$ the set of Borel sets $B\subset\RR^d$ such that $0<\vol{d}{B}<\infty$. The unit sphere with dimension $d-1$ is denoted by $\SSS$.

Now, let $k\in\NN$ be fixed. We use the short notation $x_{1:k} = (x_1,\ldots,x_k) \in (\RR^d)^{k}$, and for such a $k$-tuple of points we write $ s x_{1:k} + t = (s x_1 + t , \ldots , s x_k + t )$ for any $ s \in \RR$ and $ t \in \RR^d$.
We also consider concatenation of such vectors, for example we write $(x_{1:k},y_{1:l}) = (x_1,\ldots,x_k,y_1,\ldots,y_l)$.
For any set $ S $, we denote by $ S^k_{\neq} $ the family of vectors $ (s_1,\ldots,s_k) \in S^k$ such that $s_i \neq s_j$ for any $i\neq j$. If $\chi$ is a finite set, we also denote by $\#\chi$ its cardinality
%


Given two functions $f,g: \RR\rightarrow\RR$, we write $g(x)\eq[x]{\infty}f(x)$ if and only if $f$ and $g$ are asymptotically equivalent, i.e.\ $\frac{g(x)}{f(x)}\conv[x]{\infty}1$. Moreover, we write $g(x)=O(f(x))$  if and only if there exists a positive number $M$ and a real number $x_0$ such that $|g(x)|\leq M|f(x)|$ for any $x\geq x_0$. When $\frac{g(x)}{f(x)}\conv[x]{\infty}0$ we write $g(x)=o(f(x))$. 

The quantity $c$ denotes a generic constant which depends only on the dimension $d$. We occasionally index the constants when the distinction between several of them need to be made explicit, e.g. when two or more constants appear in a single equation.

\paragraph{Delaunay graph}
We recall that a (undirected) graph $G=(V,E)$ is a set $V$ of vertices together with a set $E$ of edges with no orientation. 
Given a graph $G$, we denote the set of neighbors of a vertex $v$ by $\neighbours_{G}(v) $, that is the set of vertices $ w \in V $ such that $\{v,w\}\in E$.

Let $\chi$ be a locally finite subset of $\RR^d$ in generic position, i.e.\ such that each subset of size $n \leq d $ is affinely independent and no $d+2$ points lie on a sphere. For a $(d+1)$-tuple of points $ x_1 , \ldots , x_{d+1} \in \chi $, we denote by $ B ( x_{1:d+1} ) $ the open circumball associated with these points. We define a Delaunay edge between $x_i$ and $x_j$ for each $1\leq i, j\leq d+1$, $i\neq j$, when $\chi\cap B(x_{1:d+1})=\varnothing$, and denote by $\del(\chi)$ the set of these edges.

Let $ x_0 \in \chi $. With a slight abuse of notation, we denote  by $\neighbours_{\chi}(x_0) = \neighbours_{(\chi,\del(\chi))}(x_0) $ the set of neighbors of $x_0$ in the Delaunay graph associated with $\chi$. In particular, the degree of $x_0$ is $ d_{\chi}(x_0) = \# \neighbours_{\chi}(x_0) $. We also denote by $F_\chi ( x_0 ) $ the \textit{Voronoi flower at $ x_0 $}, defined as the union of all open balls which do not contain any point of $\chi$ and which are circumscribed to $ x_0 $ and $ d $ other points of $ \chi $, i.e.\ 
\[ F_\chi ( x_0 ) = \bigcup_{ \substack{ x_{1:d} \in \chi^d_{\neq} \\ B(x_{0:d}) \cap \chi =\emptyset } } B(x_{0:d}). \]
The Voronoi flower at $ x_0 $ only depends on its neighbors in the corresponding Delaunay graph.
Reciprocally the Voronoi flower at $x_0$ determines its set of neighbors.
We call \textit{$\Phi$-content} of $x_0$ the volume of its Voronoi flower and denote it by
\[ \Phi_\chi(x_0) = \vol{d}{F_\chi(x_0)} . \]
If $\chi$ is a finite subset $\{x_0, x_1, \ldots, x_k\}$ of $\RR^d$, with $k\geq d$, we use the shorter notation:
\[ F_{x_{1:k}}(x_0) = F_{\{ x_0, \ldots , x_k \}}(x_0) 
\quad \text{and}\quad  \Phi_{x_{1:k}}( x_0 ) = \vol{d}{F_{x_{1:k}}(x_0)} .\]
Finally, for each $B\in \mathcal{B}_{+}^d$ and $ k \in \NN $, we let 
\begin{equation}
    \label{eq:defnumberexceedances} 
    M^B_\chi = \max_{x\in \chi\cap B} d_{\chi}(x)  \quad \text{and} \quad 
    N_\chi^B[k]=\sum_{x\in \chi\cap B}\ind{d_\chi(x)\geq k}
\end{equation}
If $\chi\cap B=\varnothing$, we take $M^B_\chi=-\infty$.

\subsection{The typical degree} 
\label{sec:typicaldegree}
Recall that $\XX$ denotes a stationary Poisson point process of intensity 1 in $\RR^d$. To describe the mean behaviour of the Poisson-Delaunay graph, the notion of typical degree is introduced as follows. Let $B\in \mathcal{B}_{+}^d$ be fixed. The \textit{typical degree} is defined as the discrete random variable $\typ$ with distribution given by 
\begin{equation}\label{eq:deftypical}\PPP{\typ=k} = \frac{1}{\vol{d}{B}}\EEE{\sum_{x\in \XX\cap B}\ind{d_\XX(x)=k} },  \end{equation} for any integer $k$. It is clear that $\PPP{\typ=k} = 0$ for any $k\leq d$.  Due to the stationarity of $\XX$, it can be shown that the right-hand side does not depend on $B$. Thanks to the Mecke-Slivnyak theorem (e.g. Theorem 9.4 in \cite{LP}), it is well-known that  
\begin{equation}\label{eq:proptypical}\typ \overset{\rm d}{=} d_{\XX\cup\{0\}}(0), \end{equation}  where $\overset{\rm d}{=}$ denotes the equality in distribution. 

\paragraph{Integral representation for the distribution of the typical degree} 
Let $x_{1:k} \in (\RR^d)^k_{\neq}$ be a $k$-tuple of distinct points, with $k\geq d$. We say that $x_{1:k}$  is in \textit{convex position} if $0$ is connected to all the $x_i$, $i=1,\ldots, k$, in the Delaunay graph associated with $\{0,x_1,\ldots,x_k\}$ and that $0$ is in the interior of the Voronoi flower $F_{\{0,x_1,\ldots,x_k\}}(0)$.
We denote by $C_k$ the set of all $k$-tuples of points in $\RR^d$ which are in convex position.
This set is stable under permutations, meaning that for any $x_{1:k}\in C_k$ and any permutation $\sigma$ of the set $\{1,\ldots,k\}$, we have  $x_{\sigma(1:k)} = (x_{\sigma(1)},\ldots,x_{\sigma(k)}) \in C_k$. We shall now derive an integral representation of the distribution of the typical degree $\typ$. 
\begin{Le}
\label{Le:explicitqk}
For each $k\geq 1$, we have
\[\PPP{\typ=k} = \int_{C_k}\ind{\Phi_{y_{1:k}}(0)\leq 1}\mathrm{d}y_{1:k}.\]
\end{Le}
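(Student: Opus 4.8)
The plan is to start from the distributional identity \eqref{eq:proptypical}, which gives $\PPP{\typ=k}=\PPP{d_{\XX\cup\{0\}}(0)=k}$, and then to apply the multivariate Mecke formula followed by a scaling argument. Since almost surely the neighbours of $0$ in $\XX\cup\{0\}$ are distinct points of $\XX$, I would first expand the indicator of the degree event as a normalized sum over ordered tuples,
\[\ind{d_{\XX\cup\{0\}}(0)=k}=\frac{1}{k!}\sum_{y_{1:k}\in\XX^k_{\neq}}\ind{\{y_1,\ldots,y_k\}=\neighbours_{\XX\cup\{0\}}(0)},\]
the factor $1/k!$ compensating for the number of orderings of a fixed unordered neighbour set. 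Taking expectations, the problem reduces to computing the mean of this sum.

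Taking expectations and applying the multivariate Mecke formula \cite{LP}, which replaces the process $\XX$ by $\XX\cup\{y_1,\ldots,y_k\}$ inside the summand, the geometric heart of the argument becomes the identification, for fixed $y_{1:k}$, of the event that the neighbours of $0$ in $\XX\cup\{0,y_{1:k}\}$ are exactly $\{y_1,\ldots,y_k\}$. Using the two properties recalled above — that the Voronoi flower at $0$ depends only on, and conversely determines, the neighbours of $0$, together with the monotonicity that adding points can only shrink the flower — I would show that, for Lebesgue-almost every $y_{1:k}$,
\[\ind{\{y_1,\ldots,y_k\}=\neighbours_{\XX\cup\{0,y_{1:k}\}}(0)}=\ind{y_{1:k}\in C_k}\,\ind{\XX\cap F_{y_{1:k}}(0)=\varnothing}\qquad\text{a.s.}\]
Indeed, if $y_{1:k}\in C_k$ and no point of $\XX$ lies in $F_{y_{1:k}}(0)$, then every empty ball defining $F_{y_{1:k}}(0)$ stays empty in the enlarged configuration, so the flower — hence the neighbour set — is unchanged and equals $\{y_1,\ldots,y_k\}$; the converse follows similarly, since any additional neighbour of $0$ coming from $\XX$ would have to lie in (the closure of) $F_{y_{1:k}}(0)$. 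Applying then the void probability of the unit-intensity process, $\PPP{\XX\cap A=\varnothing}=e^{-\vol{d}{A}}$ with $A=F_{y_{1:k}}(0)$, I would arrive at
\[\PPP{\typ=k}=\frac{1}{k!}\int_{C_k}e^{-\Phi_{y_{1:k}}(0)}\,\mathrm{d}y_{1:k}.\]

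It remains to turn the exponential weight into the indicator $\ind{\Phi_{y_{1:k}}(0)\le 1}$ and to absorb the prefactor $1/k!$. The key input is the homogeneity $\Phi_{sy_{1:k}}(0)=s^{d}\,\Phi_{y_{1:k}}(0)$ for $s>0$, together with the scale invariance of $C_k$. Writing $I(t)=\int_{C_k}\ind{\Phi_{y_{1:k}}(0)\le t}\,\mathrm{d}y_{1:k}$ and performing the substitution $y_{1:k}\mapsto t^{1/d}y_{1:k}$, whose Jacobian on $(\RR^d)^k$ is $t^{k}$, I would obtain $I(t)=t^{k}I(1)$. Hence the pushforward of Lebesgue measure on $C_k$ under $y_{1:k}\mapsto\Phi_{y_{1:k}}(0)$ has density $k\,I(1)\,t^{k-1}$, so that
\[\int_{C_k}e^{-\Phi_{y_{1:k}}(0)}\,\mathrm{d}y_{1:k}=k\,I(1)\int_0^\infty e^{-t}t^{k-1}\,\mathrm{d}t=k\,I(1)\,\Gamma(k)=k!\,I(1).\]
Dividing by $k!$ yields exactly $\PPP{\typ=k}=I(1)=\int_{C_k}\ind{\Phi_{y_{1:k}}(0)\le 1}\,\mathrm{d}y_{1:k}$, which is the claim.

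The main obstacle is the geometric characterization of the second step: one must justify, and only up to $\XX$-null sets, that ``the neighbours of $0$ are exactly $y_{1:k}$'' is equivalent to $y_{1:k}\in C_k$ together with $F_{y_{1:k}}(0)$ being empty of $\XX$, while discarding the degenerate configurations (points lying on sphere boundaries, or $0$ on the boundary rather than in the interior of its flower) that occur on a set of measure zero. By contrast, once this equivalence is established, the Mecke step and the scaling computation are routine.
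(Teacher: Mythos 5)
Your proposal follows essentially the same route as the paper: expand the degree event as a $\frac{1}{k!}$-normalized sum over $k$-tuples in convex position whose Voronoi flower avoids $\XX$, apply the multivariate Mecke equation and the Poisson void probability to obtain $\frac{1}{k!}\int_{C_k}e^{-\Phi_{y_{1:k}}(0)}\,\mathrm{d}y_{1:k}$, and then use the scaling homogeneity of $\Phi$ and the scale invariance of $C_k$ to turn the exponential weight into the indicator $\ind{\Phi_{y_{1:k}}(0)\leq 1}$ while absorbing the $\frac{1}{k!}$. The only cosmetic difference is in that last step, where the paper writes $e^{-t}=\int_t^\infty e^{-s}\,\mathrm{d}s$ and substitutes $x_{1:k}=s^{1/d}y_{1:k}$ inside the double integral, whereas you compute the pushforward density of $\Phi$ under Lebesgue measure on $C_k$ — the same computation in different clothing.
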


\begin{proof}
Using the above notation and Equation \eqref{eq:proptypical}, we write
\[\PPP{\typ=k} = \frac{1}{k!}\EEE{\sum_{x_{1:k}\in\XX^k_{\neq}} \ind{x_{1:k}\in C_k} \ind{F_{x_{1:k}}(x_0)\cap\XX=\emptyset}}.\]
The multivariate Mecke equation (e.g.\ Theorem 4.4 in \cite{LP}) allows us to rewrite the expectation of a sum over $k$-tuples of points in a Poisson point process as an integral over $k$-tuples of points in $\RR^d$. Thanks to this formula, this gives  \begin{align*}\PPP{\typ=k} & = \frac{1}{k!}\int_{C_k}\PPP{F_{x_{1:k}}(x_0)\cap \left(\XX \cup\{x_1,\ldots, x_k\}\right)=\emptyset}\mathrm{d}x_{1:k}\\
 & = \frac{1}{k!}\int_{C_k}e^{-\Phi_{x_{1:k}}(0)}\mathrm{d}x_{1:k},
 \end{align*} 
 where the second line is also a consequence of the fact that $\eta$ is a Poisson point process. Using the fact that $ e^{ - t } = \int_{ t }^{ \infty } e^{ - s } \mathrm{d}s $, we get
    \begin{align*}
        \PPP{\typ=k}
        & = \frac{ 1 }{ k ! }  \int_{ C_k } \int_{ 0 }^{ \infty } \ind{ \Phi_{x_{1:k}} ( 0 ) \leq s } e^{ - s } \mathrm{d}s\mathrm{d}x_{1:k}. 
    \end{align*}
    Now since being in convex position is invariant under rescaling and since $ \Phi_{s^{1/d }y_{1:k}}(0) = s \Phi_{y_{1:k}} (0) $, the change of variables $ x_{1:k} = s^{ 1/d } y_{1:k} $ gives
    \begin{equation*}
        \PPP{\typ=k}
         = \frac{ 1 }{ k ! } \int_{ 0 }^{ \infty }  e^{ - s } \int_{ C_k}  \ind{ \Phi_{y_{1:k}} (0) \leq 1 } s^k  \dint y_{1:k}  \, \dint s = \int_{C_k}\ind{\Phi{y_{1:k}}(0)\leq 1}\mathrm{d}y_{1:k}.
    \end{equation*}
This concludes the proof of Lemma \ref{Le:explicitqk}. 
    \end{proof}
    
 \paragraph{Estimates for the distribution of the typical degree}
 The following result provides bounds for the distribution of the typical degree in $\RR^d$, $d\geq 2$. 
 \begin{Prop}
    \label{Prop:Gilles}
  There exist two constants $c_1,c_2>0$ depending on $d$ such that, for $k$ large enough, we have 
\begin{enumerate}[(i)]
\item $\PPP{\typ=k} \leq c_2\,  k^{-\frac{2}{d-1}}\PPP{\typ=k-1}$,\label{th:upperbound}
\item $\PPP{\typ=k} \geq c_1^k \, k^{\frac{-2}{d-1}k}$.\label{th:lowerbound}
\end{enumerate}    
    In particular, for some constant $c_3$, we have 
    \[ c_1^k \, k^{\frac{-2}{d-1}k} \leq \PPP{\typ=k}    \leq c_3^k\, k^{-\frac{2}{d-1} k}\]
and 
\begin{equation}
    \label{eq:boundsGtypical}
\PPP{\typ\geq k}\eq[k]{\infty}\PPP{\typ =k}.
  \end{equation} 	    
    
  \end{Prop}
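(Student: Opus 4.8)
The plan is to work directly from the integral representation of Lemma~\ref{Le:explicitqk}, namely $\PPP{\typ=k}=\int_{C_k}\ind{\Phi_{y_{1:k}}(0)\leq 1}\,\mathrm{d}y_{1:k}$, and to control how the admissible volume of configurations changes when one adds a single neighbour. For the upper bound \eqref{th:upperbound}, the idea is to bound the integral over $C_k$ by relating a $k$-point convex configuration to a $(k-1)$-point one obtained by deleting a well-chosen vertex. First I would fix a configuration $y_{1:k}$ in convex position with $\Phi_{y_{1:k}}(0)\leq 1$; deleting one point $y_i$ yields a configuration $y_{1:k}\setminus\{y_i\}$ that is still in convex position (with $0$ still interior to its Voronoi flower) and whose $\Phi$-content can only increase but stays bounded by a fixed constant, since removing a neighbour enlarges each circumball by a controlled amount. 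The extra degree of freedom---the position of the removed point---lives in a region whose volume scales like $k^{-2/(d-1)}$. This exponent is the crucial geometric input: a new neighbour of $0$ must be inserted between two consecutive existing neighbours in a cone of angular width $\sim 1/k$, and the radial constraint imposed by $\Phi\leq 1$ (the flower must stay within unit volume) forces the insertion region to have $(d-1)$-dimensional angular measure of order $k^{-2/(d-1)}$ after accounting for the empty-ball condition. Carrying out this Fubini-type splitting and bounding the inner integral by $c_2 k^{-2/(d-1)}$ gives part~\eqref{th:upperbound}.

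For the lower bound \eqref{th:lowerbound}, the plan is to exhibit an explicit family of convex configurations with $\Phi\leq 1$ whose volume in $C_k$ is at least $c_1^k k^{-2k/(d-1)}$. The natural choice is to place the $k$ points near a sphere of small radius $r\sim k^{-2/(d(d-1))}$ (chosen so that the resulting Voronoi flower has volume at most $1$), roughly equispaced, and to allow each point a small independent perturbation. First I would verify that such a symmetric configuration is genuinely in convex position and that $0$ lies in the interior of its flower, then estimate the Lebesgue measure of the perturbation neighbourhood. Each of the $k$ points contributes a factor of order $r^{d}\cdot(\text{angular slack})$, and multiplying these together reproduces the claimed $c_1^k k^{-2k/(d-1)}$ after optimizing $r$. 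Combining the two bounds---iterating \eqref{th:upperbound} for the upper estimate $\PPP{\typ=k}\leq c_3^k k^{-2k/(d-1)}$ and using \eqref{th:lowerbound} directly---yields the two-sided bound stated in the Proposition.

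Finally, the asymptotic equivalence \eqref{eq:boundsGtypical} follows from the upper bound \eqref{th:upperbound}: writing $\PPP{\typ\geq k}=\sum_{j\geq k}\PPP{\typ=j}$ and using $\PPP{\typ=j}\leq c_2 j^{-2/(d-1)}\PPP{\typ=j-1}$ for $j$ large, the tail is dominated by a geometric-type series with ratio $c_2 j^{-2/(d-1)}\to 0$, so $\PPP{\typ\geq k}=\PPP{\typ=k}\bigl(1+o(1)\bigr)$.

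I expect the main obstacle to be the precise geometric bookkeeping behind the exponent $2/(d-1)$ in the upper bound: one must quantify exactly how much Lebesgue measure the ``inserted'' neighbour may occupy subject to both the convex-position constraint and the empty-flower condition $\Phi\leq 1$, uniformly in the remaining $k-1$ points. This requires a careful parametrization of the insertion region (for instance in terms of the circumscribed balls it creates) and a uniform volume estimate that does not degrade as the other points vary; controlling the monotonicity of $\Phi$ under point deletion while keeping it bounded is the delicate technical heart of the argument.
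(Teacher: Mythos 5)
The paper itself gives no proof of Proposition \ref{Prop:Gilles}: it explicitly defers to Theorems 1.1--1.2 of \cite{BCR} and to Chapter 5 of \cite{BonnetPhD}, so your proposal can only be measured against those arguments. Your skeleton (the integral representation of Lemma \ref{Le:explicitqk} plus a delete-one-point/re-insert bookkeeping) is indeed the skeleton of the cited proofs, but both of your key quantitative claims fail as stated, and they are exactly where the real work lies. For the upper bound (i): deleting a neighbour \emph{enlarges} the Voronoi flower, and its volume after deletion is in general not bounded by any constant --- if the remaining $k-1$ points all lie in a closed half-plane through $0$ (which happens on a set of positive measure inside $\{y_{1:k}\in C_k : \Phi_{y_{1:k}}(0)\le 1\}$), the Voronoi cell of $0$ becomes unbounded and the flower has infinite volume. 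More fatally, even if you only perform deletions after which $\Phi_{y_{1:k-1}}(0)\le C$ for some constant $C>1$, the scaling identity $\int_{C_{k-1}}\ind{\Phi_{y_{1:k-1}}(0)\le C}\mathrm{d}y_{1:k-1}=C^{k-1}\,\PPP{\typ=k-1}$ converts this into an exponential factor $C^{k-1}$, giving at best $\PPP{\typ=k}\le c\,k^{-2/(d-1)}\,C^{k-1}\,\PPP{\typ=k-1}$. That is not statement (i): the constancy of $c_2$ is the entire content of (i), and it is precisely what Lemma \ref{Le:tail} and \eqref{eq:boundsGtypical} consume downstream. The repair, which is the heart of the cited proofs, is a pigeonhole selection over the $k$ vertices of one whose deletion increases the flower volume only by a factor $1+O(1/k)$ (so the loss $(1+c/k)^{k-1}$ is $O(1)$) \emph{and} whose admissible re-insertion region has volume $O(k^{-2/(d-1)})$; neither selection appears in your sketch.

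For the lower bound (ii), your bookkeeping is off by a factor $k^{k}$. With points near a sphere of radius $r$ and angular spacing $\delta\asymp k^{-1/(d-1)}$, the convex-position and empty-petal constraints confine each point to a box of tangential side $\asymp r\delta$ and radial width $\asymp r\delta^{2}$, hence per-point volume $\asymp r^{d}\delta^{d+1}=r^{d}k^{-(d+1)/(d-1)}$; the product over $k$ points is $c^{k}r^{dk}k^{-(d+1)k/(d-1)}$, which for every admissible $r$ (one needs $r=O(1)$ to keep $\Phi\le1$) falls short of the claimed bound by $k^{-k}$. Shrinking $r$, as in your choice $r\sim k^{-2/(d(d-1))}$, only makes things worse: by scaling, $\int_{C_k}\ind{\Phi_{y_{1:k}}(0)\le t}\mathrm{d}y_{1:k}=t^{k}\,\PPP{\typ=k}$ is increasing in $t$, so small configurations carry strictly less volume. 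The missing $k^{k}$ comes from the $k!$ relabelings built into the integral over the permutation-invariant set $C_k$: the per-point boxes describe \emph{angularly ordered} configurations, Lemma \ref{Le:explicitqk} integrates over all $k!$ orderings, and Stirling ($k!\ge (k/e)^{k}$) restores exactly the factor $k^{k}$, yielding $c_1^{k}k^{-2k/(d-1)}$ with the correct choice $r\asymp 1$. Without this combinatorial factor no optimization of $r$ can produce the claimed exponent. (Your final step --- deducing \eqref{eq:boundsGtypical} from (i) by dominating the tail with a geometric-type series --- is correct, as is the iteration of (i) for the two-sided bound.)
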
 
Proposition \ref{Prop:Gilles} is very similar to two results in \cite{BCR} (Theorem 1.1 and Theorem 1.2) in which  estimates for the distribution of the typical number of facets in a Poisson hyperplane tessellation are given. We do not give its proof because it relies on a simple adaptation of several arguments included in \cite{BCR} to our setting. However, for a complete proof of Proposition \ref{Prop:Gilles}, we refer the reader to Chapter 5 in \cite{BonnetPhD} (Theorems 5.1.7 and 5.5.1). Besides, according to Proposition \ref{Prop:Gilles},  the distribution of the typical degree belongs to the class of discrete distributions considered by Anderson \cite{And}. Roughly speaking, this explains why the maximal degree belongs to two consecutive integers when the size of the window goes to infinity.

In the particular setting $d=2$,  a more precise estimate of the distribution of the typical degree is established by Hilhorst (see Equation (1.2) in \cite{Hil}):
\begin{equation}\label{eq:hilhorst} \PPP{\typ=k} = \frac{C}{4\pi^2}\cdot \frac{(8\pi^2)^k}{(2k)!}\left( 1 + O\left(  k^{-\frac{1}{2}} \right)\right),
\end{equation} where $C\simeq 0.34$. The above result is extended by Calka and Hilhorst for a larger class of random polygons in $\RR^2$ (see Equation (1.5) in \cite{HC}). However, as opposed to Proposition \ref{Prop:Gilles}, their result is not enough to derive Theorem \ref{Th:maxdegree} because it does not provide a recurrence relation between $\PPP{\typ=k}$ and $\PPP{\typ=k-1}$. 
The following remark presents a heuristic argument suggesting that, in the case $d=2$, a careful study based on \eqref{eq:hilhorst} should refine the estimate $\EEE{\Delta_\rho}$.

\begin{Rk}
The estimate $\EEE{\Delta_\rho} \simeq \frac{d-1}{2} \cdot \frac{\log \rho}{\log \log \rho}$ seems to be accurate only for extremely high values of $\rho$.
Indeed, for $d=2$, Figure \ref{fig:Maxdegree} illustrates that the empirical distribution of $\Delta_{10^6}$ concentrates on $15$ and $16$ rather than around $\frac{1}{2} \cdot \frac{\log 10^6}{\log \log 10^6} \simeq 2.6$.
This is not surprising because of the extremely slow growth of the logarithm.

Nevertheless, the following heuristic argument provides a much closer estimate of $\Delta_{10^6}$.
Thanks to \eqref{eq:hilhorst} and because of the extremely fast decay of this expression as $k$ grows, we have
\[\PPP{\typ\leq 13}\simeq 1-\PPP{\typ = 14} \simeq 1- 10^{-5} \quad  \text{and} \quad \PPP{\typ \leq 15} \simeq 1-\PPP{\typ = 16} \simeq 1-7.6\cdot 10^{-8}.\]
Assuming that the maximal degree has the same behaviour as the maximum of $10^6$ independent random variables with the same distribution as the typical degree,
\[ \PPP{\Delta_{10^6} \leq 13} \simeq 4\cdot 10^{-5} \quad  \text{and} \quad \PPP{\Delta_{10^6}\leq 15}\simeq 0.93 .\]
This suggests that $\Delta_{10^6} \in \{ 14 , 15 \}$ with high probability, which is almost what we observe in Figure \ref{fig:Maxdegree}. In the setting $d=2$, a careful study based on \eqref{eq:hilhorst} should provide an estimate of $\EEE{\Delta_\rho}$ which fits the correct value faster than ours.
\end{Rk}

  \subsection{The function $\rho \mapsto I_\rho$}
In this section, we define a function $\rho\mapsto I_\rho$, $\rho>0$, with values in $\NN$, and which depends on the dimension $d$. When $d=2$, this is the function appearing in Theorem \ref{Th:maxdegree}. To define $I_\rho$ for any $d\geq 2$, our approach is mainly inspired from \cite{And}. For any $k\geq d+1$, let $G(k)=\PPP{\typ>k}$, where $\typ$ is the typical degree in $\RR^d$. We extend $G$ as a continuous function as follows. For any $k\geq d+1$, we let $h(k)=-\log G(k)$. We consider an auxiliary function $h_c$ defined as the extension of $h$ obtained by linear interpolation, i.e.\ for any $x\geq d+1$,
\[h_c(x) = h(\lfloor x\rfloor ) + (x-\lfloor x\rfloor)(h(\lfloor x+1\rfloor) - h(\lfloor x\rfloor)).\] The function $h_c$ is continuous, strictly increasing  and $\lim_{x\rightarrow\infty}h_c(x) = \infty$.  Then we extend $G$ as the continuous function $G_c(x)=e^{-h_c(x)}$ for each $x\geq d+1$. In particular, $G_c$ is a continuous strictly decreasing function.
  Now, we define the function $\rho\mapsto I_\rho$, $\rho>0$, by 
  \begin{equation}
  \label{def:In}
  I_\rho=\left\lfloor G_c^{-1}\left(\frac{1}{\rho} \right)+\frac{1}{2}\right\rfloor.
  \end{equation}

   \section{Intermediate results}   
\label{sec:intermediateresults}
 In this section, we establish intermediate results which will be used in the proofs of Theorems \ref{Th:maxdegree} and \ref{Th:maxdegreed}. 

\subsection{Technical results}  
The following lemma provides the exact order of $I_\rho$.

  \begin{Le}
    \label{Le:eqAn}
   Let $d\geq 2$ and let $I_\rho$ be as in \eqref{def:In}. Then 
    \[I_\rho\eq[\rho]{\infty} \frac{d-1}{2} \cdot \frac{\log \rho}{\log \log \rho}.\]
  \end{Le}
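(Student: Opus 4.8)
The plan is to read off the leading asymptotics of the tail $G$ from Proposition~\ref{Prop:Gilles}, transport them to the continuous interpolant $h_c$, and then invert. First I would note that $G(k)=\PP(\typ>k)=\PP(\typ\geq k+1)$, so that \eqref{eq:boundsGtypical} gives $G(k)\eq[k]{\infty}\PP(\typ=k+1)$. On the other hand the two-sided bound $c_1^k\,k^{-\frac{2}{d-1}k}\leq \PP(\typ=k)\leq c_3^k\,k^{-\frac{2}{d-1}k}$ of Proposition~\ref{Prop:Gilles} yields, after taking logarithms, $\log\PP(\typ=k)=-\frac{2}{d-1}\,k\log k+O(k)$. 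Combining these and taking $-\log$, the linear correction $O(k)$ being negligible against $k\log k$, I obtain the clean estimate $h(k)=-\log G(k)\eq[k]{\infty}\frac{2}{d-1}\,k\log k$.

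Next I would pass this asymptotic to $h_c$. By its very definition $h_c(x)$ lies between $h(\lfloor x\rfloor)$ and $h(\lfloor x\rfloor+1)$; since $\lfloor x\rfloor\eq[x]{\infty}x$, both of these bounds are asymptotically equivalent to $\frac{2}{d-1}\,x\log x$, so by squeezing $h_c(x)\eq[x]{\infty}\frac{2}{d-1}\,x\log x$ as well. As $h_c$ is continuous, strictly increasing and tends to infinity, its inverse $G_c^{-1}$ is well defined, and setting $x_\rho:=G_c^{-1}(1/\rho)$ I have the identity $h_c(x_\rho)=\log\rho$, with $x_\rho\to\infty$ as $\rho\to\infty$.

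It then remains to invert the relation $\frac{2}{d-1}\,x_\rho\log x_\rho\eq[\rho]{\infty}\log\rho$, that is $x_\rho\log x_\rho\eq[\rho]{\infty}\frac{d-1}{2}\log\rho$. This is the standard asymptotic inversion of the map $x\mapsto x\log x$: since both sides tend to infinity one may take logarithms to get $\log x_\rho+\log\log x_\rho=\log\log\rho+O(1)$, and because $\log\log x_\rho=o(\log x_\rho)$ this forces $\log x_\rho\eq[\rho]{\infty}\log\log\rho$. Dividing $x_\rho\log x_\rho\eq[\rho]{\infty}\frac{d-1}{2}\log\rho$ by $\log x_\rho$ then gives $x_\rho\eq[\rho]{\infty}\frac{d-1}{2}\cdot\frac{\log\rho}{\log\log\rho}$. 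Finally, by \eqref{def:In} we have $I_\rho=\lfloor x_\rho+\tfrac12\rfloor$, which differs from $x_\rho$ by at most $1$ while $x_\rho\to\infty$, so the floor does not affect the equivalence and the claim follows.

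All the computations are elementary; the only delicate point is the last inversion, where one must not take logarithms of an asymptotic equivalence carelessly (this is legitimate only because both sides diverge) and must verify that $\log\log x_\rho$ is genuinely of smaller order than $\log x_\rho$. The transfer from $h$ to $h_c$ is the other place deserving a word of care, but the squeezing between $h(\lfloor x\rfloor)$ and $h(\lfloor x\rfloor+1)$ makes it immediate.
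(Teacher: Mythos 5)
Your proof is correct, and it reaches the conclusion by a genuinely different inversion technique than the paper's. Both arguments rest on the same key input, the two-sided bound $c_1^k\,k^{-\frac{2}{d-1}k}\leq \PPP{\typ=k}\leq c_3^k\,k^{-\frac{2}{d-1}k}$ of Proposition \ref{Prop:Gilles}, but the paper inverts it by guess-and-verify: writing $A_\rho=G_c^{-1}(1/\rho)$, it reduces to the discrete tail $G$ via the inf/sup characterization of $A_\rho$, proposes the explicit candidates
\[
x_\rho=\frac{d-1}{2}\cdot\frac{\log\rho}{\log\log\rho},
\qquad
y_\rho=\frac{d-1}{2}\left(\frac{\log\rho}{\log\log\rho}+2\,\frac{\log\rho}{(\log\log\rho)^2}\,\log\log\log\rho\right),
\]
and checks by direct substitution into the bounds on $G$ that $x_\rho-2\leq A_\rho\leq y_\rho$, whence the equivalence since both ends are $\sim\frac{d-1}{2}\frac{\log\rho}{\log\log\rho}$. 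You instead establish the clean asymptotic $h_c(x)\sim\frac{2}{d-1}\,x\log x$ (using \eqref{eq:boundsGtypical} to pass from $\PPP{\typ=k}$ to the tail, and squeezing $h_c$ between $h(\lfloor x\rfloor)$ and $h(\lfloor x\rfloor+1)$), then exploit the exact identity $h_c(A_\rho)=\log\rho$ and perform a soft inversion of $x\mapsto x\log x$. Your route is arguably cleaner: it avoids having to guess the second-order correction (the $\log\log\log\rho$ term in $y_\rho$) that the paper needs to make its upper-bound verification close, and the inversion argument is standard and self-contained. What the paper's computation buys in exchange is an explicit, non-asymptotic sandwich $x_\rho-2\leq A_\rho\leq y_\rho$, i.e.\ quantitative control on the error beyond the mere equivalence, though this extra precision is not exploited elsewhere in the paper. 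The two points you flag as delicate, namely that one may only take logarithms of an equivalence when both sides diverge, and the transfer from $h$ to its interpolant $h_c$, are exactly the right ones, and your treatment of both is sound.
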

  
  The following lemma deals with the probability that the typical degree is larger than $I_\rho$ up to an additive constant.
  
  \begin{Le}
    \label{Le:tail}
    Let $d\geq 2$ and let $I_\rho$ be as in \eqref{def:In}. Then
    \begin{enumerate}[(i)]
      \item $\rho\PPP{\typ\geq I_\rho+2}\conv[\rho]{\infty}0$;\label{Le:tail1}
      \item $ \left(\frac{\log\log \rho}{\log \rho}\right)^{\frac{2l}{d-1}} \rho\PPP{\typ\geq I_\rho-l} \conv[\rho]{\infty}\infty$ for each $l\geq 0$.\label{Le:tail3}
    \end{enumerate}
  \end{Le}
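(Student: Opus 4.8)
\textbf{Proof plan for Lemma \ref{Le:tail}.}

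The plan is to reduce both statements to the growth estimates contained in Proposition \ref{Prop:Gilles} together with the asymptotic equivalence $\PPP{\typ\geq k}\eq[k]{\infty}\PPP{\typ=k}$ from \eqref{eq:boundsGtypical} and the definition \eqref{def:In} of $I_\rho$. The crucial analytic input is the one-step ratio bound \ref{th:upperbound}, which says that consecutive tail values decrease by a factor of order $k^{-2/(d-1)}$; iterating it controls how much the tail changes when the index is shifted by a bounded amount. First I would record the key consequence of \eqref{def:In}: since $I_\rho=\lfloor G_c^{-1}(1/\rho)+\tfrac12\rfloor$ and $G_c$ is a continuous, strictly decreasing extension of $G(k)=\PPP{\typ>k}$, we have $G_c(I_\rho)$ within a controlled multiplicative range of $1/\rho$. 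More precisely, the rounding means $|I_\rho-G_c^{-1}(1/\rho)|\leq \tfrac12$, so that $G_c(I_\rho-\tfrac12)\geq 1/\rho\geq G_c(I_\rho+\tfrac12)$, and since on integer points $G_c$ agrees with $G$, I can compare $\rho\,G(I_\rho)$ to a factor that is a bounded power of the ratio of consecutive tail probabilities. By \ref{th:upperbound} this ratio is $\Theta(I_\rho^{-2/(d-1)})$, and by Lemma \ref{Le:eqAn} we have $I_\rho\eq[\rho]{\infty}\frac{d-1}{2}\cdot\frac{\log\rho}{\log\log\rho}$, so $I_\rho^{-2/(d-1)}$ is of order $\left(\frac{\log\log\rho}{\log\rho}\right)^{2/(d-1)}$.

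For part \ref{Le:tail1}, I would write $\rho\PPP{\typ\geq I_\rho+2}=\rho\,G(I_\rho+1)$ and telescope: $G(I_\rho+1)=\frac{G(I_\rho+1)}{G(I_\rho)}\,\frac{G(I_\rho)}{G(I_\rho-1)}\cdots$ is not needed in full; it suffices to bound $\rho\,G(I_\rho+1)$ by $\rho\,G(I_\rho)$ times two consecutive ratios. Using \eqref{eq:boundsGtypical} to replace tail probabilities by point probabilities and \ref{th:upperbound} to bound each ratio $\PPP{\typ=k}/\PPP{\typ=k-1}\leq c_2 k^{-2/(d-1)}$, the two extra steps from $I_\rho$ to $I_\rho+2$ contribute a factor of order $(I_\rho)^{-4/(d-1)}\to 0$. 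Combined with the boundedness of $\rho\,G_c(I_\rho)$ coming from the definition of $I_\rho$, this forces $\rho\PPP{\typ\geq I_\rho+2}\to 0$.

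For part \ref{Le:tail3}, the argument runs in the opposite direction: I would write $\rho\PPP{\typ\geq I_\rho-l}=\rho\,G(I_\rho-l-1)$ and now multiply up rather than down, using that each backward step from $k$ to $k-1$ multiplies the tail by a factor that is at least of order $k^{2/(d-1)}$ (the reciprocal of the ratio in \ref{th:upperbound}, valid for $k$ large). After $l+1$ backward steps this produces a gain of order $(I_\rho)^{2(l+1)/(d-1)}$ relative to $\rho\,G(I_\rho)$, which is bounded below by a positive constant by the definition of $I_\rho$. Inserting the estimate $I_\rho\eq[\rho]{\infty}\frac{d-1}{2}\cdot\frac{\log\rho}{\log\log\rho}$ from Lemma \ref{Le:eqAn} shows the product grows like $\left(\frac{\log\rho}{\log\log\rho}\right)^{2(l+1)/(d-1)}$, which beats the prefactor $\left(\frac{\log\log\rho}{\log\rho}\right)^{2l/(d-1)}$ by one extra power; hence the whole expression diverges. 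The main obstacle I anticipate is keeping the constants honest across the rounding in \eqref{def:In}: the factor $\tfrac12$ in the definition must be transferred into a clean two-sided comparison $G_c(I_\rho+\tfrac12)\leq 1/\rho\leq G_c(I_\rho-\tfrac12)$, and then converted to a comparison on integer arguments using monotonicity of $G_c$ together with the ratio bound \ref{th:upperbound}, so that the half-integer shift costs at most a constant factor and does not interfere with the polynomial-in-$I_\rho$ gains that drive the two limits.
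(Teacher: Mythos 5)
Your overall strategy (anchor at the definition \eqref{def:In}, transfer with the ratio bound of Proposition \ref{Prop:Gilles} and \eqref{eq:boundsGtypical}, induct on $l$) is the same as the paper's, and your backward induction for general $l$ in part \eqref{Le:tail3} matches the paper's proof step for step. But both of your anchoring claims are false, and they carry the whole argument. For part \eqref{Le:tail1} you invoke ``the boundedness of $\rho\, G_c(I_\rho)$'', and for part \eqref{Le:tail3} you claim that $\rho\, G(I_\rho)$ ``is bounded below by a positive constant by the definition of $I_\rho$''. Neither holds: $\rho\,G(I_\rho) = \rho\,\PPP{\typ\geq I_\rho+1}$, and, as the paper remarks just after the statement of Lemma \ref{Le:tail}, this quantity has inferior and superior limits equal to $0$ and $\infty$. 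The definition \eqref{def:In} only pins down $G_c$ at half-integer points: writing $A_\rho = G_c^{-1}(1/\rho)$, the rounding gives $A_\rho-\tfrac{1}{2} < I_\rho \leq A_\rho+\tfrac{1}{2}$, hence by monotonicity $\rho\, G_c\left(I_\rho+\tfrac{1}{2}\right)\leq 1 \leq \rho\, G_c\left(I_\rho-\tfrac{1}{2}\right)$, and nothing of the sort at the integer $I_\rho$ itself. (Also, Proposition \ref{Prop:Gilles} gives only a one-sided bound on the consecutive ratio, so your ``$\Theta(I_\rho^{-2/(d-1)})$'' is an over-reading, though this is not where the argument breaks.)

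The defect is not repaired by your closing remark that ``the half-integer shift costs at most a constant factor'': it costs an unbounded factor, and that unboundedness is precisely the engine of the lemma. Since $G(k+1)/G(k)\to 0$ and $h_c$ is a linear interpolation, the half-step ratio $G_c\left(x+\tfrac{1}{2}\right)/G_c(x)$ behaves like the square root of a consecutive ratio and tends to $0$; this is the paper's Equation \eqref{eq:rateG}. Part \eqref{Le:tail1} then follows at once from $\rho\,\PPP{\typ\geq I_\rho+2} = \rho\,G_c(I_\rho+1)\leq \rho\, G_c\left(A_\rho+\tfrac{1}{2}\right) = G_c\left(A_\rho+\tfrac{1}{2}\right)/G_c(A_\rho) \to 0$: the vanishing comes from this half-step, not from a bounded anchor multiplied by vanishing integer-step ratios. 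Symmetrically, the base case $l=0$ of part \eqref{Le:tail3} needs $\rho\,\PPP{\typ\geq I_\rho}\geq G_c\left(A_\rho-\tfrac{1}{2}\right)/G_c(A_\rho)\to\infty$. If you replace your false anchor by the correct one, namely $\rho\,\PPP{\typ\geq I_\rho}\geq \rho\,G_c\left(I_\rho-\tfrac{1}{2}\right)\geq 1$, your bookkeeping no longer closes: the gain $c^{l}\, I_\rho^{2l/(d-1)}$ from $l$ backward steps exactly cancels the prefactor $\left(\log\log\rho/\log\rho\right)^{2l/(d-1)}$, leaving a quantity bounded below by a constant rather than one diverging to infinity. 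So the missing ingredient in both parts is exactly the divergence/vanishing of the half-step ratio \eqref{eq:rateG}; once that is in place, the remainder of your induction goes through as in the paper.
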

  In particular, when $l=0$, Lemma \ref{Le:tail} \eqref{Le:tail3} means that  $\rho\PPP{\typ\geq I_\rho}$ converges to infinity. By adapting the proof of Lemma \ref{Le:tail}, it  can also be shown that $\rho\PPP{\typ\geq I_\rho+1}$ does not converge as $\rho$ goes to infinity because its infimum and supremum limits equal 0 and $\infty$ respectively. 
  
  As a consequence of Lemma \ref{Le:tail}, we could show that if $X_1,\ldots, X_n$ is a sequence of $n$ independent and identically distributed random variables, with the same distribution as the typical degree, then the maximum of $X_1,\ldots, X_n$ belongs to $\{I_n, I_n+1\}$ with probability tending to 1 as $n$ goes to infinity. 
  Even if the independency is lost, as it is the case with the vertices' degrees, the maximum remains upper bounded with high probability by $I_n+1$.
  On the other hand, if the dependency is too strong it is impossible to give a non-trivial lower bound.
  Therefore the proofs of Theorems \ref{Th:maxdegree} and \ref{Th:maxdegreed} rely on a quantification of the dependencies between the vertices' degrees.
  In Section \ref{sec:discretization} we show that vertices which are far enough have almost independent degrees.
  This is sufficient to derive Theorem \ref{Th:maxdegreed}.
  In Section \ref{sec:cluster}, at the cost of reducing the setting to $d=2$, we deal with a more local scale by showing that {there is no 5-tuple of nodes which are close to each others and such that their degrees are simultaneously larger than $I_\rho$.}  This is one of the greatest difficulties treated in this paper and one of the key arguments to prove Theorem \ref{Th:maxdegree}.
  

\subsection{A subdivision of the window $\window$} 
\label{sec:discretization}
It is well-known that a Poisson-Delaunay graph in $\RR^d$ has good mixing properties. To capture this property, we proceed as follows. We partition $\window = \rho^{1/d}[0,1]^d$  into a set $\mathcal{V}_\rho$ of $\mathrm{N}_\rho^{d}$ closed sub-cubes of equal size, where  \begin{equation}\label{def:Nn}N_\rho:=\left\lfloor  \left(\frac{\rho}{\alpha\log \rho}\right)^{1/d}  \right\rfloor,
\end{equation} for some $\alpha >2$. The volume of each sub-cube is approximately $\alpha\log \rho$ as $\rho$ goes to infinity. The sub-cubes are indexed
by the set of $\mathbf{i}:=(i_{1},\ldots ,i_{d})\in \left\{ 1,\ldots, N_\rho\right\} ^{d}$. With a slight abuse of notation, we identify a cube with its index. We denote by $\mathbf{i}_0$ the unique sub-cube in $\mathcal{V}_\rho$ which contains the origin. We now introduce a distance between sub-cubes $\mathbf{i}$ and $\mathbf{j}$ as $d(\mathbf{i},\mathbf{j}):=\max_{1\leq s\leq d}|i_{s}-j_{s}|$. If $\mathcal{I}$ and $\mathcal{J}$ are two sets of sub-cubes, we let \[d(\mathcal{I},\mathcal{J}):=\min_{\mathbf{i}\in \mathcal{I},\mathbf{j}\in \mathcal{J}}d(\mathbf{i},\mathbf{j}).\] For any $A\subset \RR^d$, we define 
\[\mathcal{I}(A):=\{\mathbf{i}\in \mathcal{V}_\rho: \mathrm{int}(\mathbf{i}\cap A)\neq \varnothing\}.\] 
Finally, to ensure several independence properties, we introduce the event 
\begin{equation*}
\mathscr{E}_\rho:=\cap _{\mathbf{i}\in \mathcal{V}_\rho}\{\XX \cap \mathbf{i}\neq
\varnothing \}.
\end{equation*}
The event $\mathscr{E}_\rho$ is extensively used in stochastic geometry to derive central
limit theorems or limit theorems in Extreme Value Theory (see e.g.\ \cite{AB,Chen}). It will
 play a crucial role in the rest of the paper. The following lemma captures the idea of ``local dependence''.  

\begin{Le}
\label{Le:Arho} Let $A,B\subset \window$ and let $N_\rho$ as in \eqref{def:Nn}, with $\alpha>2$. Then
\begin{enumerate}[(i)]
\item \label{Le:Arho1} conditional on the event $\mathscr{E}_\rho$, the random variables $M^A_\XX$ and $M^B_\XX$ are independent when $d(\mathcal{I}(A),\mathcal{I}(B))>D$, where $D:=4(\lfloor \sqrt{d}\rfloor +1)$;
\item \label{Le:Arho2}  $\PPP{\mathscr{E}_\rho^{c}} = O\left(\rho^{-(\alpha-1)} \right)$.
\end{enumerate}
\end{Le}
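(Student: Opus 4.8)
The plan is to prove the two parts of Lemma~\ref{Le:Arho} separately, the independence statement~\eqref{Le:Arho1} being the substantive one and the tail bound~\eqref{Le:Arho2} being a direct computation. For part~\eqref{Le:Arho2}, I would use the independence of the Poisson point process over disjoint regions together with the stationarity of $\XX$. Since the sub-cubes are disjoint and each has volume asymptotically $\alpha\log\rho$, the events $\{\XX\cap\mathbf{i}=\varnothing\}$ are independent, and $\PPP{\XX\cap\mathbf{i}=\varnothing}=e^{-\vol{d}{\mathbf{i}}}$. A union bound then gives
\[
\PPP{\mathscr{E}_\rho^c}\leq \sum_{\mathbf{i}\in\mathcal{V}_\rho}\PPP{\XX\cap\mathbf{i}=\varnothing} = N_\rho^d\, e^{-\vol{d}{\mathbf{i}}}.
\]
Using $N_\rho^d\leq \rho/(\alpha\log\rho)$ and $\vol{d}{\mathbf{i}}\sim\alpha\log\rho$, the right-hand side is of order $(\rho/\log\rho)\cdot \rho^{-\alpha}=O(\rho^{-(\alpha-1)})$, which is exactly the claimed bound. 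Care is needed because the cube volume is $(\rho^{1/d}/N_\rho)^d$ rather than exactly $\alpha\log\rho$ due to the floor in~\eqref{def:Nn}, but this only affects constants and leading asymptotics, not the exponent.

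For the independence statement~\eqref{Le:Arho1}, the key geometric fact I would establish is a \emph{finite range of dependence} property of the Delaunay graph, conditional on $\mathscr{E}_\rho$: the degree $d_\XX(x)$ of a vertex $x\in\XX\cap A$ is determined by the configuration of points of $\XX$ within a bounded number of sub-cubes around $x$. On $\mathscr{E}_\rho$ every sub-cube is nonempty, and this forces all empty circumballs $B(x_{0:d})$ incident to a given vertex $x_0$ to have radius bounded by a constant multiple of the cube diameter; otherwise a large empty ball would have to avoid a nonempty neighbouring cube, a contradiction. Consequently the Voronoi flower $F_\XX(x_0)$, and hence the neighbour set and the degree of $x_0$, depends only on $\XX$ restricted to sub-cubes within distance at most $D/2 = 2(\lfloor\sqrt d\rfloor+1)$ of the cube containing $x_0$. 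The constant $D=4(\lfloor\sqrt d\rfloor+1)$ is chosen precisely so that if $d(\mathcal{I}(A),\mathcal{I}(B))>D$, the enlarged regions of influence of $A$ and of $B$ use disjoint collections of sub-cubes.

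The main obstacle is making the finite-range claim rigorous, since it is the geometric heart of the argument. The delicate point is that $\mathscr{E}_\rho$ is an event about \emph{all} of $\window$, so the configurations determining $M^A_\XX$ and $M^B_\XX$ are not literally functions of disjoint independent pieces of $\XX$ — the conditioning couples everything. The clean way around this is to localize: on $\mathscr{E}_\rho$, each sub-cube being nonempty, one shows that $M^A_\XX$ is a deterministic function of the pair $\bigl(\XX\cap U_A,\ \mathscr{E}_\rho\bigr)$ where $U_A$ is the union of sub-cubes within distance $D/2$ of $\mathcal{I}(A)$, and likewise $M^B_\XX$ depends only on $\XX\cap U_B$. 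When $d(\mathcal{I}(A),\mathcal{I}(B))>D$ the collections $U_A$ and $U_B$ are disjoint, so $\XX\cap U_A$ and $\XX\cap U_B$ are independent by the Poisson property. Conditioning on $\mathscr{E}_\rho$ factorizes as a product of the local nonemptiness events over the disjoint blocks, so the conditional independence survives. I would verify the radius bound carefully: a $d$-dimensional empty circumball through $x_0$ of radius $r$ much larger than the cube diameter must contain the center of some neighbouring sub-cube in its interior together with a surrounding ball of radius comparable to the cube size, and since that cube is nonempty on $\mathscr{E}_\rho$ this yields the desired contradiction once $r$ exceeds the threshold fixing $D$.
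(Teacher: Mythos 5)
Your proposal cannot be checked line-by-line against the paper's own argument, because the paper does not actually prove Lemma \ref{Le:Arho}: it defers to Lemma 5 of \cite{Chen} and Lemma 1 of \cite{ChenRob}. Your route is, however, exactly the standard one behind those references. Part \eqref{Le:Arho2} is correct, and even cleaner than you suggest: the floor in \eqref{def:Nn} gives $N_\rho^d\leq \rho/(\alpha\log\rho)$, hence each sub-cube has volume $\rho/N_\rho^d\geq \alpha\log\rho$ exactly (not merely asymptotically), so the union bound yields $\PPP{\mathscr{E}_\rho^c}\leq N_\rho^d\,\rho^{-\alpha}=O\left(\rho^{-(\alpha-1)}\right)$; note that no independence between the cubes is needed for a union bound.

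For part \eqref{Le:Arho1}, your skeleton (a radius bound on empty circumballs under $\mathscr{E}_\rho$, localization of each degree to a union $U_A$ of nearby sub-cubes, disjointness of $U_A$ and $U_B$ when $d(\mathcal{I}(A),\mathcal{I}(B))>D$, and factorization of the conditioning event over disjoint blocks) is the right and standard argument, and the factorization step is handled correctly. The genuine gap is in the radius bound itself: the claim that on $\mathscr{E}_\rho$ \emph{every} empty circumball incident to a vertex $x_0$ has radius $O(\text{cube diameter})$ fails for vertices in cubes touching the boundary of $\window$. The event $\mathscr{E}_\rho$ constrains only sub-cubes inside $\window$; an empty ball through a point $x_0$ at distance less than one cube side from $\partial\window$, centered in the outward direction, meets $\window$ only in a lens whose height is smaller than the cube side, so it contains no sub-cube no matter how large its radius, and your contradiction never materializes. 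Consequently such a vertex can have Delaunay neighbours arbitrarily far away in the (completely unconstrained, connected) exterior of $\window$ — indeed a vertex of $A$ and a vertex of $B$ can even be neighbours of each other through a ball dipping outside the window — so $M^A_\XX$ and $M^B_\XX$ are not functions of disjoint pieces of $\XX$, and the strict factorization argument breaks down whenever $\mathcal{I}(A)$ or $\mathcal{I}(B)$ contains boundary cubes. A complete proof must either restrict the localization claim to cubes at index-distance of order $\sqrt d$ from $\partial\window$ (the application in the paper then has to discard the negligible boundary layer of $\mathcal{V}'_\rho$), or enlarge the grid carrying $\mathscr{E}_\rho$ so that it covers a sufficiently large neighbourhood of $\window$. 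Everything else in your sketch — the interior radius bound via an inscribed sub-cube, the bookkeeping with $D$, the conditional factorization over disjoint blocks — is sound.
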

The above lemma has been used in various papers (e.g. Lemma 5 in \cite{Chen} and Lemma 1 in \cite{ChenRob}). We refer the reader to these papers for a proof.

 \subsection{Family of five nodes with large degrees when $d=2$}\label{sec:cluster}
 In this section, we only deal with the case $d=2$. Recall that $\mathbf{i}_0$ is defined as the unique square in $\mathcal{V}_\rho$ which contains the origin (see Section \ref{sec:discretization}). When $\rho$ goes to infinity, the order of the area of such a square is $\alpha \log \rho$, with $\alpha>2$. 
 
 The following result shows that, with high probability, there is no 5-tuple of nodes which are close to each others and such that their degrees are simultaneously larger than $I_\rho$.
 Recall that the random variable $ N_\XX^B[k]=\sum_{x\in \XX\cap B}\ind{d_\XX(x)\geq k} $, as introduced in \eqref{eq:defnumberexceedances}, denotes the number of exceedances in $B$. 
 \begin{Prop}
 \label{Prop:cluster5}
 Let $B\in \mathcal{B}_{+}^2$ be  a convex and symmetric Borel subset in $\RR^2$. Then there exist two positive constants $c_4,c_5$ (independent of $B$) such, that for any $k\geq 1$, we have
 \[\PPP{N_\XX^B[k]\geq 5}  \leq \left(c_4\, \vol{2}{B} \, k^{-2}+c_5^k\, \vol{2}{B}^{2}\, k^{-2k/23}\right)\PPP{\typ=k}.\]
 \end{Prop}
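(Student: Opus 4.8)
The plan is to combine a first–moment estimate over $5$-tuples with the planarity of the Delaunay graph. Since $\ind{N_\XX^B[k]\ge 5}\le\binom{N_\XX^B[k]}{5}$ and $\binom{N_\XX^B[k]}{5}=\frac1{5!}\sum_{x_{1:5}\in(\XX\cap B)^5_{\neq}}\prod_{i=1}^5\ind{d_\XX(x_i)\ge k}$, the multivariate Mecke equation gives
\[
\PPP{N_\XX^B[k]\ge 5}\le\frac1{5!}\int_{B^5}\PPP{\bigcap_{i=1}^5\{d_{\XX\cup\{x_1,\ldots,x_5\}}(x_i)\ge k\}}\,\dint x_{1:5}.
\]
I would then unfold the integrand exactly as in the proof of Lemma \ref{Le:explicitqk}: for each $x_i$ select a $k$-tuple of candidate neighbours placing it in convex position, and use the void probability of $\XX$ to replace the joint degree condition by the emptiness of the union of the five Voronoi flowers, i.e.\ by a weight $\exp\bigl(-\vol{2}{\bigcup_{i=1}^5 F_\XX(x_i)}\bigr)$ integrated over the admissible neighbour tuples. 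Everything then reduces to estimating this geometric integral, and the whole point is that planarity forces the five flowers to interact.

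Next I would fix a constant $L_0=O(1)$ of the order of the typical neighbour distance (recall that $\vol{2}{F}\approx \pi r^2$, so the neighbours of a degree-$k$ node sit within distance $O(1)$ of it) and split $B^5$ into two regimes. In the \emph{clustered} regime all five nodes lie in a common ball of radius $L_0$, so that only the position of the cluster is free in $B$, producing a single factor $\vol{2}{B}$; in the \emph{spread} regime at least two of the five nodes are more than $L_0$ apart, leaving two free positions and hence a factor $\vol{2}{B}^2$. The two regimes are thus designed to generate the two terms of the statement, and the convexity and central symmetry of $B$ enter only when keeping the integration variables inside $B$ and bounding the positional integrals by the corresponding powers of $\vol{2}{B}$.

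In the spread regime the neighbour sets of the two separated nodes essentially decouple, so the void weight factorises; the remaining three nodes, together with Euler's bound (at most $3n-6$ edges in a planar graph on the $n$ vertices spanned by the five nodes and their neighbours), force a positive proportion of the neighbours to be distinct, and estimating the cost of the resulting additional convex-position sub-configuration through Proposition \ref{Prop:Gilles} yields the exponentially small factor $c_5^k\,k^{-2k/23}$, the fraction $1/23$ being the explicit output of the underlying packing estimate. This accounts for the second term. In the clustered regime the five nodes are mutually close, and this is where planarity is decisive: five mutually close high-degree nodes cannot realise their degrees independently (the $K_5$/Euler obstruction singled out in the introduction), and one shows that, up to constants, the cluster behaves like a single node whose degree is forced to exceed $k$ by one extra unit. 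The upper bound of Proposition \ref{Prop:Gilles}, which for $d=2$ reads $\PPP{\typ=k}\le c_2\,k^{-2}\,\PPP{\typ=k-1}$, then converts this extra unit of degree into the gain $k^{-2}$, giving the first term $c_4\,\vol{2}{B}\,k^{-2}\,\PPP{\typ=k}$; here $\PPP{\typ\ge k}\eq[k]{\infty}\PPP{\typ=k}$ from \eqref{eq:boundsGtypical} is used freely.

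The main obstacle is the clustered regime. Without planarity one can only bound it by $\vol{2}{B}\,\PPP{\typ\ge k}$, i.e.\ by the order of the single-exceedance expectation, which is far too weak to be summable over the $\approx\rho/\log\rho$ sub-cubes at $k=I_\rho$; the entire difficulty is to extract the additional factor $k^{-2}$, and this rests on a careful, genuinely two-dimensional analysis (Euler's formula combined with the geometry of the Voronoi flowers) establishing that five nearby nodes cannot simultaneously carry degree $\ge k$ without paying for one extra degree. Quantifying how many neighbours must be distinct, and translating that count into a usable content lower bound or into one application of the recurrence, is the delicate heart of the proof.
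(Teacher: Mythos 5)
There are genuine gaps here, and they stem from the fact that your decomposition is not the right one. First, your spatial dichotomy at scale $L_0=O(1)$ rests on a false premise: conditional on $\typ=k$, the volume of the Voronoi flower is Gamma$(k+1)$-distributed (this is visible in the proof of Lemma \ref{Le:explicitqk}, where after rescaling the scale variable $s$ carries the density $s^k e^{-s}/k!$), so the neighbours of a degree-$k$ node typically lie at distance of order $\sqrt{k}$, not $O(1)$. Consequently, in your ``spread'' regime two nodes at distance $L_0$ have strongly overlapping flowers and their degree events do \emph{not} decouple, while your ``clustered'' regime fails to contain the dangerous configurations, since pairs of degree-$k$ nodes sharing many neighbours occur at separations up to order $\sqrt{k}$. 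Second, the key step of your clustered regime --- that five mutually close nodes of degree $\geq k$ force one extra unit of degree, so that Proposition \ref{Prop:Gilles} yields the factor $k^{-2}$ --- is asserted, not proved, and it is not what planarity gives. The actual deterministic consequence of planarity (via $K_{3,3}$ applied to triples and $K_5$ applied to a contracted pair graph) is: among any five vertices of a planar graph, some \emph{pair} has at most $20$ common neighbours outside the five, hence at most $23$ in all. For such a pair of degree-$k$ nodes, the one with the smaller flower has at least $\lfloor (k-23)/23\rfloor$ \emph{consecutive} private neighbours, and the probabilistic cost of this private fan (an integral over the sets $D_{k',x,s}$, bounded through Lemma \ref{Le:majvol}) is the exponentially small factor $c^k k^{-2k/23}$, not a gain of $k^{-2}$.

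Relatedly, your attribution of the two terms is backwards. In the correct proof the term $c_4\,\vol{2}{B}\,k^{-2}\,\PPP{\typ=k}$ has nothing to do with clustering or planarity: it is the trivial first-moment bound $\PPP{\exists\, x\in\XX\cap B \mid d_\XX(x)\geq k+1}\leq \vol{2}{B}\,\PPP{\typ\geq k+1}$ combined with $\PPP{\typ\geq k+1}\leq c\,k^{-2}\PPP{\typ=k}$ from Proposition \ref{Prop:Gilles} and \eqref{eq:boundsGtypical}. The term $c_5^k\,\vol{2}{B}^2\,k^{-2k/23}\,\PPP{\typ=k}$ then bounds the remaining event that at least five nodes have degree \emph{exactly} $k$, via the planar pair argument above; the factor $\vol{2}{B}^2$ arises because the distinguished pair ranges over $B\times B$ (one factor from the Mecke equation, one from integrating the pair's displacement over $2B$), with no separation condition anywhere. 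So the right dichotomy is in the \emph{degree} (some node of degree $>k$, versus five nodes of degree exactly $k$), not in space: under your spatial split, both regimes still require the full pair/planarity analysis, which your sketch gestures at (``Euler's bound'', ``packing estimate'') but never actually carries out.
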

The above result is the key ingredient to derive Theorem \ref{Th:maxdegree} and contains the main difficulty of our problem. It extensively uses the fact that the Delaunay graph is planar.

\subsection{A lower bound for the distribution's tail of the maximal degree in a block}
According to \eqref{eq:deftypical}, it is clear that $\PPP{M^{B}_\XX \geq k}\leq \vol{d}{B}\PPP{\typ \geq k}$ for each $k\in\NN$ and any Borel set $B\subset\RR^d$. The following results deal with the reciprocal of this inequality. The first one only concerns the case $d=2$ and will be used to prove Theorem \ref{Th:maxdegree}. 

\begin{Prop}
\label{Prop:maxblock}
Let $B\in \mathcal{B}_{+}^2$ be a convex and symmetric Borel subset in $\RR^2$. Then there exists an integer $k_0$ depending on $B$ such that
 \[\PPP{M^{B}_\XX \geq k} \geq \frac{\vol{2}{B}}{5} \PPP{\typ \geq k},\]
 for any $k\geq k_0$.
\end{Prop}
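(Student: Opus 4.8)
The plan is to prove Proposition~\ref{Prop:maxblock} by relating the probability that the maximal degree in $B$ exceeds $k$ to the expected number of exceedances $N^B_\XX[k]$, and then controlling the second moment of $N^B_\XX[k]$ using Proposition~\ref{Prop:cluster5}. The starting point is the elementary inequality, valid for any nonnegative integer-valued random variable $N$,
\[
\PP(N\geq 1) \geq \frac{(\EE N)^2}{\EE[N^2]},
\]
which is the Paley--Zygmund / Cauchy--Schwarz bound. Applying this with $N = N^B_\XX[k]$ gives
\[
\PPP{M^B_\XX \geq k} = \PPP{N^B_\XX[k]\geq 1} \geq \frac{\left(\EE N^B_\XX[k]\right)^2}{\EE\!\left[\left(N^B_\XX[k]\right)^2\right]}.
\]
By the definition \eqref{eq:deftypical} of the typical degree, the numerator is exactly $\left(\vol{2}{B}\PPP{\typ\geq k}\right)^2$, so the whole problem reduces to bounding the second moment $\EE[(N^B_\XX[k])^2]$ from above by roughly $5\,\vol{2}{B}\PPP{\typ\geq k}$.

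First I would expand the second moment. Writing $N := N^B_\XX[k]$, one has the identity $\EE[N^2] = \EE[N] + \EE[N(N-1)]$, and by the multivariate Mecke formula the factorial-moment term $\EE[N(N-1)]$ is an integral involving pairs of exceeding vertices. The difficulty is that $\EE[N(N-1)]$ can in principle be large — a single high-degree configuration tends to force neighboring vertices to also have high degree, creating clusters of exceedances. This is precisely where Proposition~\ref{Prop:cluster5} enters, but to use it I would instead organize the computation around the integer-valued truncation $\min(N,5)$. The key observation is that $N$ and $\min(N,5)$ have the same event $\{N\geq 1\}$, and $\EE[\min(N,5)] \geq \EE[N] - 5\,\PPP{N\geq 5}$ while $\EE[\min(N,5)^2]\leq 5\,\EE[\min(N,5)]\leq 5\,\EE[N]$. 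Applying the Paley--Zygmund inequality to $\min(N,5)$ rather than to $N$ yields
\[
\PPP{M^B_\XX\geq k} \;\geq\; \frac{\left(\EE[\min(N,5)]\right)^2}{\EE[\min(N,5)^2]} \;\geq\; \frac{\left(\EE N - 5\,\PPP{N\geq 5}\right)^2}{5\,\EE N}.
\]

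The final step is to verify that the correction term $5\,\PPP{N\geq 5}$ is negligible compared to $\EE N = \vol{2}{B}\PPP{\typ\geq k}$, at least for $k$ large. Here Proposition~\ref{Prop:cluster5} gives
\[
\PPP{N\geq 5} \leq \left(c_4\,\vol{2}{B}\,k^{-2} + c_5^k\,\vol{2}{B}^2\,k^{-2k/23}\right)\PPP{\typ=k},
\]
and by \eqref{eq:boundsGtypical} we have $\PPP{\typ=k}\eq[k]{\infty}\PPP{\typ\geq k}$. Hence $\PPP{N\geq 5}/\EE N$ is of order $c_4 k^{-2} + c_5^k\,\vol{2}{B}\,k^{-2k/23}$, which tends to $0$ as $k\to\infty$; in particular there is a threshold $k_0$, depending on $\vol{2}{B}$ and hence on $B$, beyond which $5\,\PPP{N\geq 5}\leq \tfrac12\EE N$. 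Substituting this into the displayed lower bound gives $\PPP{M^B_\XX\geq k}\geq \tfrac{(\EE N/2)^2}{5\,\EE N} = \tfrac{1}{20}\EE N$, which already has the right form; a slightly sharper bookkeeping of the constant (choosing $k_0$ so that $5\,\PPP{N\geq 5}$ is a smaller fraction of $\EE N$) recovers the factor $\vol{2}{B}/5$ claimed in the statement.

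\emph{The main obstacle} I anticipate is not the probabilistic inequality itself but the interface with Proposition~\ref{Prop:cluster5}: one must be careful that the convexity and symmetry hypotheses on $B$ are exactly those required by that proposition, and that the truncation level $5$ in $\min(N,5)$ matches the ``no 5-tuple'' structure of Proposition~\ref{Prop:cluster5}, since it is the bound on $\PPP{N\geq 5}$ (and not on higher exceedance counts) that makes the correction term controllable. The delicate point is tracking how $k_0$ depends on $\vol{2}{B}$ through the $c_5^k\,\vol{2}{B}^2 k^{-2k/23}$ term, which decays superexponentially in $k$ but carries a $\vol{2}{B}^2$ prefactor; one must confirm that for fixed $B$ this term is eventually dominated, which it is because $k^{-2k/23}$ beats any fixed power of $\vol{2}{B}$ as $k\to\infty$.
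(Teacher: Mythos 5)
Your overall architecture (truncate the exceedance count $N:=N^B_\XX[k]$ and control the truncation error via Proposition \ref{Prop:cluster5}) is close in spirit to the paper's proof, but the step that bounds the truncation error is wrong, and it is exactly the step where the real difficulty sits. You claim $\EEE{\min(N,5)}\geq \EEE{N}-5\,\PPP{N\geq 5}$. The true deficit is $\EEE{N}-\EEE{\min(N,5)}=\EEE{(N-5)^{+}}$, the expectation of an \emph{unbounded} random variable on the event $\{N\geq 5\}$, and it is not bounded by any constant multiple of $\PPP{N\geq 5}$ (if $N\equiv 100$, the left side of your inequality is $5$ while the right side is $95$). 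Since Proposition \ref{Prop:cluster5} only yields a bound on the \emph{probability} $\PPP{N\geq 5}$, an additional mechanism is needed to convert it into a bound on this expectation, and no cheap one works: Cauchy--Schwarz, for instance, gives $\EEE{\#(\XX\cap B)\ind{N\geq 5}}\leq\sqrt{\EEE{\#(\XX\cap B)^2}}\sqrt{\PPP{N\geq 5}}$, and $\sqrt{\PPP{N\geq 5}}\approx\sqrt{\PPP{\typ=k}}$ is astronomically larger than the target scale $\PPP{\typ\geq k}\approx k^{-2k}$. The paper's solution is to bound $N\ind{N\geq 5}\leq \#(\XX\cap B)\,\ind{N\geq 5}$ and split according to whether $\#(\XX\cap B)\leq k^{1+\epsilon}\vol{2}{B}$ with $\epsilon\in(0,1)$: on this event one pays only the polynomial factor $k^{1+\epsilon}\vol{2}{B}$ in front of $\PPP{N\geq 5}$, which is absorbed by the $k^{-2}$ in Proposition \ref{Prop:cluster5}; on the complement one uses the exponential Poisson tail of $\#(\XX\cap B)$ together with the lower bound $\PPP{\typ=k}\geq c_1^k k^{-2k}$ from Proposition \ref{Prop:Gilles}. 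This mechanism is entirely absent from your argument.

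There is also a problem with the constant. Even granting a correct estimate $\EEE{N}-\EEE{\min(N,5)}=o(\EEE{N})$, your Paley--Zygmund bound with truncation level $5$ yields $\PPP{M^B_\XX\geq k}\geq \frac{\left(\EEE{N}(1-o(1))\right)^2}{5\,\EEE{N}}=\frac{\vol{2}{B}}{5}\PPP{\typ\geq k}\,(1-o(1))^2$, which is \emph{strictly below} the claimed $\frac{\vol{2}{B}}{5}\PPP{\typ\geq k}$ for every $k$; no choice of $k_0$ ``recovers'' the factor $1/5$, contrary to what you assert at the end. You need slack in the main constant: truncate at $4$ rather than $5$, as the paper implicitly does by writing $\EEE{N\ind{N\leq 4}\ind{M^B_\XX\geq k}}\leq 4\,\PPP{M^B_\XX\geq k}$; then the main term carries the constant $1/4$, and the $o(1)$ loss is absorbed by degrading $1/4$ to $1/5$ for $k\geq k_0$. (Paley--Zygmund is also unnecessary here: $\min(N,4)\leq 4\,\ind{N\geq 1}$ already gives $\PPP{M^B_\XX\geq k}\geq \EEE{\min(N,4)}/4$.)
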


The following result provides a lower bound which is less accurate than the one of Proposition \ref{Prop:maxblock}, but deals with the general case $d\geq 2$. It will be used to prove Theorem  \ref{Th:maxdegreed}.

\begin{Prop}
\label{Prop:maxblockd}
Let $B\in \mathcal{B}_{+}^d$, $d\geq 2$. Then, for any $k\in\NN$ and $h\geq 1$, we have
 \[\PPP{M^{B}_\XX \geq k} \geq \frac{\vol{d}{B}}{h+1}\left( \PPP{\typ \geq k} - \exp\left(-\vol{d}{B}\left(1-e+\frac{h}{\vol{d}{B}}\right)\right)\right).\]
\end{Prop}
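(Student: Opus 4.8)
The plan is to reduce the statement to a lower bound on the probability that the number of exceedances is positive, and then to control that probability by a truncated first moment. Set $N := N^B_\XX[k]$ and note that, with the convention $M^B_\XX = -\infty$ when $\XX\cap B=\varnothing$, one has the exact identity $\{M^B_\XX \geq k\} = \{N \geq 1\}$, so it suffices to bound $\PPP{N\geq 1}$ from below. Summing the definition \eqref{eq:deftypical} of the typical degree over all values at least $k$ yields immediately $\EEE{N} = \vol{d}{B}\,\PPP{\typ \geq k}$. I will also bring in the total count $P := \#(\XX\cap B)$, which is a Poisson variable of mean $\vol{d}{B}$ and satisfies $N \leq P$ pointwise; the point of introducing $P$ is that the exponential term in the statement is a deviation bound for $P$.

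The core of the argument is the truncation inequality $\PPP{N\geq 1} \geq \frac{1}{h+1}\EEE{N \wedge (h+1)}$, which follows at once by taking expectations in $N \wedge (h+1) \leq (h+1)\ind{N\geq 1}$. Writing $N\wedge(h+1) = N - (N-(h+1))^+$, I then need to control the overshoot $\EEE{(N-(h+1))^+}$. Since $N\leq P$ and $x\mapsto (x-(h+1))^+$ is nondecreasing, I bound it by $\EEE{(P-(h+1))^+} \leq \EEE{P\,\ind{P\geq h+1}}$. The Poisson size-biasing identity $\EEE{P\,g(P)} = \vol{d}{B}\,\EEE{g(P+1)}$, applied with $g(m)=\ind{m\geq h+1}$, turns the right-hand side into $\vol{d}{B}\,\PPP{P\geq h}$, so that $\EEE{(N-(h+1))^+}\leq \vol{d}{B}\,\PPP{P\geq h}$.

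The last step is to recognise the exponential term in the statement as the Cramér--Chernoff bound for the Poisson tail at parameter $s=1$: applying Markov's inequality to $e^{P}$ gives $\PPP{P\geq h}\leq e^{-h}\EEE{e^{P}} = e^{-h}e^{\vol{d}{B}(e-1)} = \exp\!\left(-\vol{d}{B}\left(1-e+\tfrac{h}{\vol{d}{B}}\right)\right)$. Combining the three bounds yields $\EEE{N\wedge(h+1)} \geq \vol{d}{B}\,\PPP{\typ\geq k} - \vol{d}{B}\exp\!\left(-\vol{d}{B}\left(1-e+\tfrac{h}{\vol{d}{B}}\right)\right)$, and dividing by $h+1$ reproduces the claimed inequality exactly. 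I do not expect a genuine obstacle here: the whole difficulty is bookkeeping, namely choosing the truncation level $h+1$ so that the leading term recovers $\EEE{N}=\vol{d}{B}\PPP{\typ\geq k}$ while the overshoot is absorbed by the Poisson deviation bound. The only point requiring slight care is that $h\geq 1$ is an arbitrary real number, but both the size-biasing identity and Markov's inequality remain valid as stated, so no integrality adjustment is needed.
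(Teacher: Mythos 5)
Your proof is correct and is essentially the paper's own argument: both rest on a truncated first-moment bound at level $h+1$ (using $\PPP{M^B_\XX\geq k}=\PPP{N^B_\XX[k]\geq 1}$ and $\EEE{N^B_\XX[k]}=\vol{d}{B}\PPP{\typ\geq k}$), both reduce the excess term to $\vol{d}{B}\,\PPP{\#(\XX\cap B)\geq h}$, and both finish with the identical exponential Markov bound $\PPP{\#(\XX\cap B)\geq h}\leq e^{-h}\EEE{e^{\#(\XX\cap B)}}$. The only presentational difference lies in the middle step: you bound $N^B_\XX[k]\leq \#(\XX\cap B)$ pointwise and invoke the elementary Poisson size-biasing identity, whereas the paper applies the Mecke equation to the exceedance sum and then discards the degree condition in the integrand — these amount to the same computation.
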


\subsection{A bound for the probability of a finite union of events}
\begin{Le}
\label{Le:union}
   Let $(\Omega, \mathcal{F}, \PP)$ be a probability space and let $ B^{(1)}, \ldots, B^{(K)}$, $K\geq 1$, be a collection of events such that $ \PPP{ \bigcap_{ j \leq k + 1 } B^{( i_j) } } = 0 $, for any $ 1 \leq  i_1 < \ldots < i_{ k + 1 }\leq K $.    Then 
    \begin{align*}
 \PPP{ \bigcup_{ i= 1}^K B^{(i)} }\geq \frac{1}{k}        \sum_{ i = 1}^K \PPP{ B^{(i)} }. 
    \end{align*}
\end{Le}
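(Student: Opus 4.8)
The plan is to prove the inequality
\[
\PPP{\bigcup_{i=1}^K B^{(i)}} \geq \frac{1}{k} \sum_{i=1}^K \PPP{B^{(i)}}
\]
by a counting argument on the indicator functions. The hypothesis that every intersection of $k+1$ of the events is null means that, almost surely, no point $\omega$ of the probability space belongs to more than $k$ of the events $B^{(1)},\ldots,B^{(K)}$. I would first make this precise by setting $S(\omega) = \sum_{i=1}^K \ind{\omega \in B^{(i)}}$, the number of events containing $\omega$, and observing that $S(\omega) \leq k$ for $\PP$-almost every $\omega$. Indeed, if some $\omega$ belonged to $k+1$ of the events, say $B^{(i_1)},\ldots,B^{(i_{k+1})}$ with $i_1 < \ldots < i_{k+1}$, then $\omega \in \bigcap_{j\leq k+1} B^{(i_j)}$; since each such intersection has probability zero and there are finitely many of them, their union is still null, so this can only happen on a null set.

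Next I would relate the two sides of the inequality to the random variable $S$. On the one hand, by linearity of expectation, $\EE[S] = \sum_{i=1}^K \PPP{B^{(i)}}$, which is the sum appearing on the right. On the other hand, $S(\omega) \geq 1$ exactly when $\omega$ lies in at least one event, i.e.\ $\ind{S \geq 1} = \ind{\omega \in \bigcup_i B^{(i)}}$, so $\EE[\ind{S\geq 1}] = \PPP{\bigcup_{i=1}^K B^{(i)}}$. The key pointwise bound is then $S \leq k \cdot \ind{S \geq 1}$, valid almost surely: where $S = 0$ both sides vanish, and where $S \geq 1$ we use the almost-sure upper bound $S \leq k$ established in the previous step. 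Taking expectations of this inequality gives
\[
\sum_{i=1}^K \PPP{B^{(i)}} = \EE[S] \leq k\, \EE[\ind{S\geq 1}] = k\, \PPP{\bigcup_{i=1}^K B^{(i)}},
\]
and dividing by $k$ yields the claim.

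I do not anticipate a genuine obstacle here; the only point requiring a little care is the passage from the hypothesis on $(k+1)$-fold intersections to the almost-sure bound $S \leq k$, since one must note that there are only finitely many such intersections (namely $\binom{K}{k+1}$) so that the union of the exceptional null sets is again null. Everything else is a clean application of linearity of expectation combined with the deterministic inequality $S \leq k\,\ind{S \geq 1}$. This is essentially a weighted, one-sided version of the inclusion–exclusion / Bonferroni philosophy, and the argument is robust and short.
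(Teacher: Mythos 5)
Your proof is correct. Both arguments hinge on the same object, the multiplicity function counting how many of the events contain a given $\omega$ (your $S(\omega)$, the paper's $d(\omega)$), but you execute it differently: the paper invokes an exact identity from Kuai, Alajaji and Takahara (Lemma 1 in the cited reference \cite{KAT}), namely $\PPP{\bigcup_{i=1}^K B^{(i)}} = \sum_{i=1}^{K}\sum_{l=1}^K a_i(l)/l$ with $a_i(l)=\PPP{\{\omega\in B^{(i)} : d(\omega)=l\}}$, then observes that the hypothesis kills all terms with $l>k$ and bounds $1/l \geq 1/k$ on the remaining ones. You instead prove the bound from scratch via the almost-sure pointwise inequality $S \leq k\,\ind{S\geq 1}$ and linearity of expectation, i.e.
\begin{equation*}
\sum_{i=1}^K \PPP{B^{(i)}} = \EEE{S} \leq k\, \EEE{\ind{S\geq 1}} = k\, \PPP{\bigcup_{i=1}^K B^{(i)}},
\end{equation*}
together with the (correctly handled) observation that a finite union of the null $(k+1)$-fold intersections is null. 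What your route buys is self-containedness: no external lemma is needed, and the argument is if anything shorter than the statement of the KAT identity. What the paper's route buys is the exact decomposition of the union probability, which makes transparent where the factor $1/k$ is lossy (only the strata with $d(\omega)<k$ are underestimated) and explains the remark that for $k=1$ the inequality is an equality; your argument yields that remark too, since for $k=1$ the bound $S\leq \ind{S\geq 1}$ is an a.s.\ equality, but it is slightly less visible. Either proof is perfectly adequate for the role the lemma plays in the paper.
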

Notice that when $k=1$, the inequality  is actually an equality.

\section{Proofs of Theorems and Corollaries}
\label{sec:maintheorem}
\subsection{Proof of Theorem \ref{Th:maxdegree}}
Let $I_\rho$ be as in \eqref{def:In}. According to Lemma \ref{Le:eqAn}, we have  $I_\rho\eq[\rho]{\infty}\frac{1}{2}\cdot \frac{\log \rho}{\log\log \rho}$. Now, we have to show that $\PPP{\Delta_\rho \in \{I_\rho, I_\rho+1\}}\conv[\rho]{\infty}1$.

 To do it, we first notice that
\begin{align*}
\PPP{\Delta_\rho\geq I_\rho+2} & = \PPP{\bigcup_{x\in \XX\cap \window}\left\{ d_\XX(x)\geq I_\rho+2  \right\}}\\
& \leq \EEE{\sum_{x\in \XX\cap \window}\ind{ d_\XX(x)\geq I_\rho+2  }}\\
& = \rho\PPP{\typ \geq I_\rho+2},
\end{align*}
where the last equality comes from \eqref{eq:deftypical}. According to Lemma \ref{Le:tail}, the probability $\PPP{\Delta_\rho\geq I_\rho+2}$ converges to 0.

Secondly, we show that $\PPP{\Delta_\rho\leq I_\rho-1}$ converges to 0, which will prove Theorem \ref{Th:maxdegree}. This second step is much more delicate than the first one and uses all the intermediate results presented in the previous section. We subdivide the window $\window$ into $N_\rho^2$ sub-squares of equal size as described in Section \ref{sec:discretization}, where $N_\rho$ is defined in \eqref{def:Nn} with $d=2$, $\alpha>2$. This gives
\begin{equation}
\label{eq:majDelta0}
\PPP{\Delta_\rho \leq I_\rho-1}  \leq \PPP{\Delta_\rho\leq I_\rho-1|\mathscr{E}_\rho} + \PPP{\mathscr{E}_\rho^c} = \PPP{\left.\bigcap_{\mathbf{i}\in \mathcal{V}_\rho}  \left\{M_\XX^{\mathbf{i}}\leq I_\rho-1\right\}  \right|\mathscr{E}_\rho} + \PPP{\mathscr{E}_\rho^c},
\end{equation}
where $M_\XX^{\mathbf{i}}$ is defined in Section \ref{sec:notation} and where $\mathscr{E}_\rho$ and $\mathcal{V}_\rho$ are defined in Section \ref{sec:discretization} respectively. Let $\mathcal{V}'_\rho$ be the family of sub-cubes $\mathbf{i}$ for which each coordinate of its index $(i_1,i_2)$ is a multiple of $9$. The cardinality of $\mathcal{V}'_\rho$ is larger than $\lfloor N_\rho / 9 \rfloor^2$. Note that for any pair of distinct sub-cubes $\mathbf{i}, \mathbf{j}\in \mathcal{V}'_\rho$, we have  $d(\mathbf{i}, \mathbf{j})>D$, with $D=4(\lfloor\sqrt{2}\rfloor +1)=8$. According to Lemma \ref{Le:Arho} \eqref{Le:Arho1}, conditional on $\mathscr{E}_\rho$, we know that the events $\{M_\XX^{\mathbf{i}}\leq I_\rho-1\}$ and $\{M_\XX^{\mathbf{j}}\leq I_\rho-1\}$ are independent for each $\mathbf{i}\neq  \mathbf{j}\in \mathcal{V}'_\rho$. Using the fact that $\PPP{\left.M_\XX^{\mathbf{i}}\leq I_\rho-1\right|\mathscr{E}_\rho}\leq \PPP{M_\eta^{\mathbf{i}_0}\leq I_\rho-1 }/\PPP{\mathscr{E}_\rho}$ for each $\mathbf{i}$, we get
\begin{equation*} \PPP{\left.\bigcap_{\mathbf{i}\in \mathcal{V}_\rho}  \left\{M_\XX^{\mathbf{i}}\leq I_\rho-1\right\}  \right|\mathscr{E}_\rho} \leq  \prod_{\mathbf{i}\in \mathcal{V}'_\rho}\PPP{\left.M_\XX^\mathbf{i}\leq I_\rho-1\right|\mathscr{E}_\rho}   \leq   \left( \frac{ \PPP{M^{\mathbf{i}_0}_\XX  \leq I_\rho-1}}{\PPP{\mathscr{E}_\rho}}  \right)^{\lfloor N_\rho / 9 \rfloor^2}.
\end{equation*}
Besides, according to Lemma \ref{Le:Arho} \eqref{Le:Arho2}, we know that $\PPP{\mathscr{E}_\rho} = 1-O(\rho^{-(\alpha-1)})$ with $\alpha>2$. This together with the above equation and Equation \eqref{eq:majDelta0} gives, for $\rho$ large enough,
\begin{equation*}
\PPP{\Delta_\rho \leq I_\rho-1}  \leq \exp\left(-\frac{N_\rho^2}{82} \left(\PPP{M^{\mathbf{i}_0}_\XX\geq I_\rho}-\log(\PPP{\mathscr{E}_\rho})\right)\right) + O\left(\rho^{-(\alpha-1)}\right).
\end{equation*}
Since $\alpha >2$, we have $N_\rho^2 \log(\PPP{\mathscr{E}_\rho})\conv[\rho]{\infty}0$. Thus 

\begin{equation}
\label{eq:majDelta}
\PPP{\Delta_\rho \leq I_\rho-1}  \leq \exp\left(-\frac{N_\rho^2}{82} \PPP{M^{\mathbf{i}_0}_\XX\geq I_\rho}+o(1)\right) + O\left(\rho^{-(\alpha-1)}\right).
\end{equation}

To prove that the right-hand side of the above equation converges to 0, we have to show that $N_\rho^2 \PPP{M^{\mathbf{i}_0}_\XX\geq I_\rho}$ converges to infinity. Notice that we cannot directly apply Proposition \ref{Prop:maxblock} to the block $B=\mathbf{i}_0$ and to the integer $k=I_\rho$ because both quantities depend on $\rho$. To deal with $\PPP{M^{\mathbf{i}_0}_\XX\geq I_\rho}$,  we sub-divide the square $\mathbf{i}_0$ into $K_\rho^2$  sub-squares of equal size, say $S_1, \ldots, S_{K_\rho^2}$, with $K_\rho=\lfloor (\log \rho)^{1/2} \rfloor$. The area of each sub-square $S_i$ is larger than $\alpha$, and converges to $\alpha$ as $\rho$ goes to infinity.

Now, we define a finite collection of events $B_\rho^{(1)}, \ldots, B_\rho^{(K_\rho^2)}$ as follows. For each $1\leq i\leq K_\rho^2$, we let
\[B_\rho^{(i)}=\{M^{S_i}_\XX \geq I_\rho\} \setminus \{N_\XX^{\mathbf{i}_0}[I_\rho]\geq 5\},\] where we recall that 
$N_\XX^{\mathbf{i}_0}[I_\rho]$, as defined in \eqref{eq:defnumberexceedances}, denotes the number of nodes with degree larger than $I_\rho$. In particular, we have
\[\PPP{M^{\mathbf{i}_0}_\XX\geq I_\rho} \geq \PPP{\bigcup_{i=1}^{K_\rho^2}B_\rho^{(i)}}.\]
Moreover, we know that    $ \PPP{ \bigcap_{ j \leq 5 } B^{( i_j) }_\rho } = 0 $, for any $ 1 \leq  i_1 < \ldots < i_{ 5 }\leq K_\rho^2 $. It follows from  Lemma \ref{Le:union} that 
\[\PPP{M^{\mathbf{i}_0}_\XX\geq I_\rho}\geq  \frac{1}{4}\sum_{i=1}^{K_\rho^2}\PPP{B_\rho^{(i)}} \geq \frac{1}{4}\sum_{i=1}^{K_\rho^2}\PPP{M_\XX^{S_i}\geq I_\rho} - \frac{K_\rho^2}{4}\PPP{N_\XX^{\mathbf{i}_0}[I_\rho]\geq 5}.\]
According to Proposition \ref{Prop:cluster5} and the facts that $\vol{2}{\mathbf{i}_0}=O\left(\log \rho\right)$ and $I_\rho\eq[\rho]{\infty}\frac{1}{2}\cdot \frac{\log \rho}{\log\log \rho}$, we have \[\PPP{N_\XX^{\mathbf{i}_0}[I_\rho]\geq 5} = o\left(\PPP{\typ=I_\rho}\right).\] 
To deal with $\PPP{M_\XX^{S_i}\geq I_\rho}$, recall that the area of  $S_i$ is larger than $\alpha$ so that, up to a translation, the square $S_i$ contains the square $S'=\left[-\sqrt{\frac{\alpha}{2}}, \sqrt{\frac{\alpha}{2}}\right]^2$. Due to the stationarity of $\XX$, we have $\PPP{M_\XX^{S_i}\geq I_\rho}\geq \PPP{M_\XX^{S'}\geq I_\rho}$. According to Proposition \ref{Prop:maxblock} applied to $B=S'$, with $\vol{2}{S'} = \alpha$,  we obtain for $\rho$ large enough and for each $1\leq i\leq K_\rho^2$, 
\[\PPP{M_\XX^{S_i}\geq I_\rho} \geq \frac{\alpha}{5}\PPP{\typ\geq I_\rho}.\]
Summing over $i=1,\ldots, K_\rho^2$, we deduce for $\rho$ large enough that 
\[\PPP{M^{\mathbf{i}_0}_\XX\geq I_\rho} \geq \frac{K_\rho^2\alpha}{21}\PPP{\typ\geq I_\rho}.
\]

Now, we can conclude the proof of Theorem \ref{Th:maxdegree}. Indeed, it follows from the above inequality and Equation \eqref{eq:majDelta} that
\[\PPP{\Delta_\rho\leq I_\rho-1} \leq \exp\left(-c\, N_\rho^2 \, K_\rho^2\PPP{\typ\geq I_\rho} + o(1) \right) + O\left( \rho^{-(\alpha-1)}  \right),\]
 for some positive constant $c$. This proves Theorem \ref{Th:maxdegree} thanks to Lemma \ref{Le:tail}  \eqref{Le:tail3} and the fact that $N_\rho^2\eq[\rho]{\infty}\frac{\rho}{\alpha\log \rho} $ and $K_\rho^2\eq[\rho]{\infty}\log \rho$.

\subsection{Proof of Corollary \ref{cor:onevalue}}
To define the sequence $(\rho_i)$, we first introduce for each $i\geq 1$ the set $D_i=\{\rho\in \RR_+: I_\rho=i\}$, where $I_\rho$ is as in \eqref{def:In}. Let $i\geq 1$ be fixed. The set $D_i$ is non-empty since it contains the number $m_i=\left(G_c\left(i-\frac{1}{2}\right)\right)^{-1}$. Because $\rho\mapsto I_\rho$ is increasing,  $D_i$ is an interval. Moreover, this interval is  bounded since $D_j$ is non-empty for each $j\geq 1$. Thus the family $(D_i)$ is a partition of $\RR_+$ into bounded intervals. We can easily show that these intervals are left-closed and right-open respectively. 

Now, we define the sequence $(\rho_i)$ as follows. For each $i\geq 2$, we let $\rho_i=\sup D_{i-1}=\min D_{i}$.  In particular, we have $I_{\rho_i-1} \leq I_{\rho_i}-1=i-1$. The sequence $(\rho_i)$ is increasing and converges to infinity. According to Theorem \ref{Th:maxdegree}, we have
\begin{equation}
\label{eq:coronevalue}
\PPP{\Delta_{\rho_i} \in \{I_{\rho_i}, I_{\rho_i}+1\}}\conv[i]{\infty}1.
\end{equation}
Moreover, according to Lemma \ref{Le:tail} \eqref{Le:tail1}, we know that $(\rho_i-1)\PPP{\typ\geq I_{\rho_i-1}+2}$ converges to 0 as $i$ goes to infinity. Since $I_{\rho_i-1} \leq I_{\rho_i}-1$, this implies that 
\[\rho_i\PPP{\typ\geq I_{\rho_i}+1}\conv[i]{\infty}0.\] Bounding $\PPP{\Delta_{\rho_i}\geq I_{\rho_i}+1}$ by $\rho_i\PPP{\typ\geq I_{\rho_i}+1}$, we deduce that $\PPP{\Delta_{\rho_i}\geq I_{\rho_i}+1}$ converges to 0. This together with  \eqref{eq:coronevalue} and the fact that $I_{\rho_i}=i$ concludes the proof of Corollary \ref{cor:onevalue}.

  \subsection{Proof of Theorem \ref{Th:maxdegreed}}
  Let $J_\rho=I_\rho+1-l_d$, where $I_\rho$ is defined in \eqref{def:In}. According to Lemma \ref{Le:eqAn}, we know that  $J_\rho\eq[\rho]{\infty}\frac{d-1}{2}\cdot \frac{\log \rho}{\log\log \rho}$. Now, we have to show that $\PPP{\Delta_\rho \in \{J_\rho, J_\rho+1, \ldots, J_\rho+l_d\}}\conv[\rho]{\infty}1$.
  
  As in the proof of Theorem \ref{Th:maxdegree} we easily show that $\PPP{\Delta_\rho\geq I_\rho+2}$ converges to 0 as $\rho$ goes to infinity. It remains to prove that  $\PPP{\Delta_\rho\leq I_\rho-l_d}$ also converges to 0. To do it, we proceed at this step in the same spirit as in the case $d=2$. We divide $\window$ into $N_\rho^d$ sub-cubes of equal size, where $N_\rho$ is given in \eqref{def:Nn}, for some $\alpha>2$. For some positive constant $c$ this gives (see Equation \eqref{eq:majDelta})
  \begin{equation}
  \label{eq:ineqmaxdegreed1} 
  \PPP{\Delta_\rho\leq I_\rho-l_d}  \leq \exp\left(-c\, N_\rho^d \PPP{M^{\mathbf{i}_0}_\XX\geq I_\rho-l_d+1}+o(1)\right) + O\left( \rho^{-(1-\alpha)}\right).
  \end{equation}
 Now we have to show that  $N_\rho^d \PPP{M^{\mathbf{i}_0}_\XX\geq I_\rho-l_d+1}$ converges to infinity. This time we apply Proposition \ref{Prop:maxblockd} by taking $B=\mathbf{i}_0$, $k=I_\rho-l_d+1$,  and $h=\beta\, \vol{d}{\mathbf{i}_0}$ for some $\beta>0$. This gives
 \begin{equation*}
 \PPP{M^{\mathbf{i}_0}_\XX\geq I_\rho-l_d+1} \geq \frac{\vol{d}{\mathbf{i}_0}}{\beta\,\vol{d}{\mathbf{i}_0}+1} \left(\PPP{\typ \geq I_\rho-l_d+1} -  \exp\left(-(1- e+\beta)\vol{d}{\mathbf{i}_0} \right)\right).
 \end{equation*}
To deal with  the right-hand side, we recall that $N_\rho^d\eq[\rho]{\infty}\frac{\rho}{\alpha\log \rho}$ and that $\vol{d}{\mathbf{i}_0}\eq[\rho]{\infty}\alpha\log \rho$, with $\alpha>2$. This gives 
\[N_\rho^d\, \exp\left(-(1- e+\beta)\vol{d}{\mathbf{i}_0}   \right) = o\left( \rho^{-(\alpha(1-e+\beta)-1)}  \right).\]
Taking $\beta$ in such a way that $\alpha(1-e+\beta)>1$, we obtain that $N_\rho^d\, \exp\left(-(1- e+\beta)\vol{d}{\mathbf{i}_0}   \right) $ converges to 0. Moreover, it follows from  Lemma  \ref{Le:tail}  \eqref{Le:tail3} that 
 \[    \left( \frac{\log\log \rho}{\log \rho}  \right)^{\frac{2(l_d-1)}{d-1}}\, \log (\rho)\, N_\rho^d \PPP{\typ \geq I_\rho-l_d+1} \conv[\rho]{\infty}\infty. \]
Thus  
  \begin{equation}
  \label{eq:ineqmaxdegreed2} 
 \left( \frac{\log\log \rho}{\log \rho}  \right)^{\frac{2(l_d-1)}{d-1}}\, \log (\rho)\, N_\rho^d \PPP{M^{\mathbf{i}_0}_\XX\geq I_\rho-l_d+1}\conv[\rho]{\infty}\infty.
   \end{equation}
  Since $\frac{2(l_d-1)}{d-1}>1$, we deduce that $N_\rho^d \PPP{M^{\mathbf{i}_0}_\XX\geq I_\rho-l_d+1}$ converges to infinity. This  concludes the proof of Theorem \ref{Th:maxdegreed}. 
 
\subsection{Proof of Corollary \ref{cor:eq}} 
 First, we write the expectation of the maximal degree as follows:
 \[\EEE{\Delta_\rho} = \sum_{k=1}^{I_\rho-l_d}k\PPP{\Delta_\rho=k} +  \sum_{k=I_\rho+1-l_d}^{I_\rho+d}k\PPP{\Delta_\rho=k} +  \sum_{k=I_\rho+d+1}^{\infty}k\PPP{\Delta_\rho=k}.\]
 
 For the first term, we notice that
 \[\sum_{k=1}^{I_\rho-l_d}k\PPP{\Delta_\rho=k} \leq \sum_{k=1}^{I_\rho-l_d}k\PPP{\Delta_\rho\leq I_\rho-l_d}\eq[\rho]{\infty}\frac{I_\rho^2}{2}\PPP{\Delta_\rho\leq I_\rho-l_d}.\]
 According to \eqref{eq:ineqmaxdegreed1} and \eqref{eq:ineqmaxdegreed2}   and the fact that $I_\rho\eq[\rho]{\infty}\frac{d-1}{2}\cdot\frac{\log \rho}{\log\log \rho}$, the term $\sum_{k=1}^{I_\rho-l_d}k\PPP{\Delta_\rho=k} $ converges to 0. Moreover, as a consequence of Theorem \ref{Th:maxdegreed}, we know that $\sum_{k=I_\rho+1-l_d}^{I_\rho+d}k\PPP{\Delta_\rho=k}$ is asymptotically equivalent to $\frac{d-1}{2}\cdot \frac{\log \rho}{\log\log \rho}$. For the third term, we have
 \[\sum_{k=I_\rho+d+1}^{\infty}k\PPP{\Delta_\rho=k} = (I_\rho+d)\PPP{\Delta_\rho\geq I_\rho+d+1} +  \sum_{k=I_\rho+d+1}^\infty\PPP{\Delta_\rho\geq k}.\]
The first term of the right-hand side can be bounded as follows: \[(I_\rho+d)\PPP{\Delta_\rho\geq I_\rho+d+1}\leq (I_\rho+d)\rho\PPP{\typ\geq I_\rho+d+1}.\]
According to Proposition \ref{Prop:Gilles},there exists a positive constant $c$ such that  
 \[(I_\rho+d)\PPP{\Delta_\rho\geq I_\rho+d+1}\leq c\, I_\rho^{-1}\, \rho\PPP{\typ=I_\rho+2}.\]
 The last term converges to 0 according to Lemma \ref{Le:tail} \eqref{Le:tail1}. Moreover, thanks again to Proposition \ref{Prop:Gilles}, we can also show that  the series $\sum_{k=I_\rho+d+1}^\infty\PPP{\Delta_\rho\geq k}$ is asymptotically equivalent to $\rho\PPP{\typ\geq I_\rho+d+1}$. Since this quantity converges to 0, this shows that  $\sum_{k=I_\rho+d+1}^{\infty}k\PPP{\Delta_\rho=k}\conv[\rho]{\infty}0$. Consequently, we have $\EEE{\Delta_\rho}\eq[\rho]{\infty}\frac{d-1}{2}\cdot \frac{\log \rho}{\log\log \rho}$. 
 
  \section{Proofs of technical results}
  \label{sec:lemmas}
  

  \subsection{Proof of Lemma \ref{Le:eqAn}}
Let \begin{equation}\label{eq:defAn}
A_\rho=G_c^{-1}\left(\frac{1}{\rho} \right),
\end{equation} so that $I_\rho=\lfloor A_\rho+\frac{1}{2}\rfloor$. Since $G_c$ is a continuous strictly decreasing function, the term $A_\rho$ can be written as
  \[A_\rho=\inf\left\{x\in\RR_+ \mid G_c(x)\leq \frac{1}{\rho}\right\} =\sup\left\{x\in\RR_+ \mid G_c(x)\geq \frac{1}{\rho}\right\}.  \] It is enough to prove that $x_\rho-2\leq A_\rho\leq y_\rho$, where 
  \begin{equation*}
  x_\rho := \frac{d-1}2 \cdot \frac{\log \rho}{\log \log \rho} 
  \quad \text{and} \quad y_\rho := \frac{d-1}2 \left( \frac{\log \rho}{\log \log \rho}+2\,\frac{\log \rho}{(\log \log \rho)^2}\, \log \log \log \rho \right) .
  \end{equation*}
  
  To prove that $A_\rho\geq x_\rho-2$, we notice that
  \begin{equation*}
  A_\rho  \geq \inf\left\{k\in\NN \mid G(k-1)\leq \frac{1}{\rho}\right\} - 2.
   \end{equation*}
Besides, according to Proposition \ref{Prop:Gilles} and the fact that $G(k-1)=\PPP{\typ\geq k}$ is larger than $\PPP{\typ= k}$, we have $G(k-1)\geq c_1^k\, k^{-\frac{2}{d-1}k}$. Thus
\[ A_\rho  \geq \inf\left\{k\in\NN \mid c_1^kk^{\frac{-2}{d-1} k}\leq \frac{1}{\rho}\right\} - 2 \geq \inf\left\{x\in\RR_+ \mid c_1^{x}x^{\frac{-2}{d-1}x}\leq \frac{1}{\rho}\right\} - 2.\]
Moreover, for $\rho$ large enough, we have
  \[ c_1^{x_\rho} x_\rho^{\frac{-2}{d-1} x_\rho}
  = \exp \left( -\log \rho + \frac{\log \rho}{\log \log \rho}\, (\log \log \log \rho + O(1)) \right)
  > \frac{1}{\rho}. \]   In particular, we have $x_\rho\leq \inf\left\{x\in\RR_+ \mid c_1^{-x} x^{\frac{-2}{d-1}x}\leq \frac{1}{\rho}\right\}$, which proves that $A_\rho\geq x_\rho-2$.

  To prove that $A_\rho\leq y_\rho$, we proceed along the same lines as above. Indeed,
  \begin{equation*}
  A_\rho  \leq \sup\left\{k\in\NN \mid G(k-1)\geq \frac{1}{\rho}\right\}.
  \end{equation*}
Besides, because of Proposition \ref{Prop:Gilles}, there exists a constant $c_6>0$ such that, for each $k\in \NN$, we have  $G(k-1)\leq c_6^k\, k^{-\frac{2}{d-1}k}$. Thus
  \[   A_\rho \leq  \sup\left\{k\in\NN \mid c_6^k k^{\frac{-2}{d-1}k}\geq \frac{1}{\rho}\right\}  \leq \sup\left\{y\in\RR_+ \mid  c_6^y y^{\frac{-2}{d-1}y}\geq \frac{1}{\rho}\right\}.\] 
Moreover, with standard computations, we can easily show that
  \begin{equation*}
   c_6^{y_\rho}{y_\rho}^{\frac{-2}{d-1}y_\rho}    = \exp \left( -\log \rho - \frac{\log \rho}{\log \log \rho}\, (\log \log \log \rho + O(1))\right)
  < \frac{1}{\rho}.
  \end{equation*}
  In particular, we have   
  $y_\rho\geq \sup\left\{y\in\RR_+ \mid c_6^y y^{\frac{-2}{d-1}y}\geq \frac{1}{\rho}\right\} $,
  which proves that $A_\rho\leq y_\rho$.

  \subsection{Proof of Lemma \ref{Le:tail}}
  \textit{Proof of \eqref{Le:tail1}}.  First, we notice that for each $k\in\NN$, we have $\frac{G(k+1)}{G(k)}\conv[k]{\infty}0$. With standard computations (see e.g.\ the proof of Theorem 1 in \cite{And}), we easily show that 
  \begin{equation}
  \label{eq:rateG}
  \frac{G_c(x+y)}{G_c(x)}\conv[x]{\infty}0,
  \end{equation} for each $x,y\in\RR_+$. In particular, we get
  \[\rho\PPP{\typ\geq I_\rho+2} = \rho \, G_c(I_\rho+1) \leq \rho \, G_c\left(A_\rho+\frac{1}{2}\right) = o(\rho \, G_c(A_\rho)),\] where $A_\rho$ is defined in \eqref{eq:defAn}.  Since $G_c(A_\rho)=\frac{1}{\rho}$, we have $\rho\PPP{\typ\geq I_\rho+2} =o(1)$. 
  
 \noindent \textit{Proof of \eqref{Le:tail3}}. First, we deal with the case $l=0$. Proceeding in the same spirit as above, Equation \eqref{eq:rateG} gives
  \[ \rho\PPP{\typ \geq I_\rho} 
  = \rho \, G_c(I_\rho-1) 
  \geq \rho \, G_c\left(A_\rho-\frac{1}{2} \right)
  = \frac{G_c\left(A_\rho-\frac{1}{2} \right)}{G_c\left(A_\rho\right)}
  \conv[\rho]{\infty}\infty. \] 
  The general case follows from an induction on $l$ and from the following lines:
 \begin{align*}
 \rho \, \PPP{\typ \geq I_\rho-l} & \geq \rho\PPP{\typ = I_\rho-l}\\
 & \geq \rho\, c_2^{-1} (I_\rho-l+1)^{\frac{2}{d-1}}\PPP{\typ = I_\rho-l+1}\\
 & \eq[\rho]{\infty} c\, \rho\left(\frac{\log \rho}{\log\log \rho}\right)^{\frac{2}{d-1}}\PPP{\typ \geq  I_\rho-l+1},
 \end{align*}
  where the second inequality is a consequence of Proposition \ref{Prop:Gilles} and where the third line comes from  \eqref{eq:boundsGtypical} and the fact that $I_\rho \eq[\rho]{\infty}\frac{d-1}{2}\cdot \frac{\log \rho}{\log\log \rho}$.

\subsection{Proof of Proposition \ref{Prop:cluster5}}
 First, we show that if $N_\XX^B[k]\geq 5$, then almost surely there exists  at least one pair of nodes in $B$, with degree larger than $k$ but with few vertices in common. Then we show that such an event cannot occur with high probability. To do it, we begin with a result on deterministic geometric graphs, established in the following paragraph.

\paragraph{A bound for the number of common vertices in a deterministic geometric graph}

  \begin{Prop}
        \label{Prop:planargraph}
        Let $ G = ( V , E ) $ be a simple planar graph in $\RR^2$ and let $ S=\{s_1,s_2,s_3,s_4,s_5\}  \subset V$ be a set of five distinct vertices.
        Then there exist two vertices $ s_i, s_j \in S $ such that $ \#\left( \neighbours_G ( s_i ) \cap \neighbours_G ( s_j ) \setminus S \right) \leq 20 .$
    \end{Prop}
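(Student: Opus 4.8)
The plan is to argue by contradiction and to exhibit a subdivision of $K_5$ inside $G$, which is impossible for a planar graph by Kuratowski's theorem. For each pair $1\le i<j\le 5$ I write $A_{ij}=\neighbours_G(s_i)\cap\neighbours_G(s_j)\setminus S$ for the set of common neighbours of $s_i$ and $s_j$ lying outside $S$, and I suppose for contradiction that $\#A_{ij}\ge 21$ for every one of the $\binom{5}{2}=10$ pairs.

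The key observation is that a single common neighbour $w\in A_{ij}$ provides a path $s_i - w - s_j$ of length two whose only interior vertex $w$ lies in $V\setminus S$. If I can select, for the ten pairs, ten \emph{distinct} interior vertices, then the corresponding ten paths meet only at the branch vertices $s_1,\dots,s_5$ and therefore constitute a subdivision of $K_5$ with branch set $S$. Since $K_5$ is non-planar and planarity is preserved under subdivision, the presence of such a subgraph contradicts the planarity of $G$, which will close the argument.

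The selection of distinct representatives is immediate by a greedy procedure: I order the ten pairs arbitrarily and process them one at a time, choosing for the current pair $(i,j)$ any vertex of $A_{ij}$ not yet used. When I reach a given pair at most $9$ vertices have been chosen so far, whereas $\#A_{ij}\ge 21>9$, so an unused representative always exists. (Equivalently, Hall's marriage condition holds trivially: the union of any subfamily of the $A_{ij}$ already contains one set of size $\ge 21\ge 10\ge$ the number of pairs.) This produces the ten distinct interior vertices and hence the forbidden topological $K_5$.

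I expect no serious obstacle here; the only point requiring care is to verify that the ten length-two paths really form a topological $K_5$, that is, that their interior vertices are pairwise distinct and distinct from the branch vertices $s_1,\dots,s_5$ — both guaranteed respectively by the greedy choice and by the inclusion $A_{ij}\subseteq V\setminus S$, together with the fact that the edges $\{s_i,w\}$ and $\{w,s_j\}$ actually belong to $G$. I would conclude that the assumption is untenable, so some pair $(s_i,s_j)$ satisfies $\#A_{ij}\le 20$, which is the claim. I note moreover that running the same argument with the threshold $10$ in place of $21$ already yields the sharper statement that some pair satisfies $\#A_{ij}\le 9$.
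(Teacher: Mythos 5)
Your proof is correct, and it reaches the contradiction by a leaner route than the paper's. Both arguments end the same way: ten internally disjoint length-two paths $s_i - w_{ij} - s_j$ with distinct interior vertices outside $S$ form a subdivision of $K_5$, which cannot occur in a planar graph. The difference lies in how the distinct interior vertices are produced. The paper first proves an auxiliary lemma (Lemma \ref{lem:graph2}, via the non-planarity of $K_{3,3}$): any three vertices of $S$ have at most two common neighbours outside $S$. Hence the set $T$ of vertices outside $S$ adjacent to at least three members of $S$ satisfies $\#T \leq 2\binom{5}{3} = 20$, and since each pair is assumed to have at least $21$ common neighbours, each pair can select a representative $v_{ij} \notin T$, i.e.\ adjacent to exactly $s_i$ and $s_j$ among $S$ --- which forces the $v_{ij}$ to be pairwise distinct. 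You obtain distinctness instead by a pure counting (greedy/Hall) argument: there are only ten pairs, and each candidate set has at least $21 > 9$ elements, so fresh representatives always exist. This makes the $K_{3,3}$ lemma entirely unnecessary, and, as you observe, your argument runs already with threshold $10$, proving the sharper conclusion that some pair satisfies $\#\left( \neighbours_G(s_i) \cap \neighbours_G(s_j) \setminus S \right) \leq 9$; the paper's constant $20$ is an artefact of its method (the bound $\#T \leq 20$) rather than of the problem. The improvement would even propagate downstream: in the proof of Proposition \ref{Prop:cluster5} the constant $23 = 20 + 3$ could be replaced by $12 = 9 + 3$, giving a decay $k^{-2k/12}$ in place of $k^{-2k/23}$, although this changes nothing in the paper's asymptotic statements.
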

    To prove Proposition \ref{Prop:planargraph}, we will use the following lemma.
     \begin{Le}
        \label{lem:graph2}
        Let $ G = (V, E) $ be a simple planar graph in $\RR^2$ and let $S'=\{s_1, s_2, s_3\}\subset V$ be a set of three distinct vertices in $G$.  Then $ \#\left( \cap_{ i \leq 3 } \neighbours_G ( s_i ) \setminus S' \right) \leq 2 $.    \end{Le}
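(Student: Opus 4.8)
The plan is to prove Lemma \ref{lem:graph2} using the fact that three vertices with many common neighbors would force a $K_{3,3}$ subdivision, which is forbidden in a planar graph by Kuratowski's theorem. First I would let $W = \cap_{i\leq 3}\neighbours_G(s_i)\setminus S'$ be the set of common neighbors of $s_1,s_2,s_3$ other than those three vertices themselves, and suppose for contradiction that $\#W \geq 3$. Picking any three distinct vertices $w_1,w_2,w_3 \in W$, each $w_m$ is adjacent to each $s_i$ (since $w_m \in \neighbours_G(s_i)$ for every $i$), and the six vertices $s_1,s_2,s_3,w_1,w_2,w_3$ are pairwise distinct because $w_m \notin S'$. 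Hence the edges $\{s_i, w_m\}$ for $1\leq i,m\leq 3$ form a complete bipartite graph $K_{3,3}$ as a subgraph of $G$.

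Since $K_{3,3}$ is non-planar and it appears here as an actual subgraph (not merely a minor), any planar $G$ cannot contain it; this contradicts the planarity of $G$. Therefore $\#W \leq 2$, which is exactly the claim. I would state Kuratowski's theorem (or, more directly, the elementary fact that $K_{3,3}$ is non-planar, provable via Euler's formula: a bipartite planar graph on $n\geq 3$ vertices has at most $2n-4$ edges, but $K_{3,3}$ has $6$ vertices and $9 > 2\cdot 6 - 4 = 8$ edges) as the single external input, so the argument is self-contained.

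The only point requiring a little care is the bookkeeping of which vertices might coincide, so that the extracted subgraph genuinely has six distinct vertices and a full set of nine edges. The definition of $W$ already removes $s_1,s_2,s_3$, so each $w_m$ differs from each $s_i$; and the $w_m$ are chosen distinct. I do not expect any real obstacle here — the whole difficulty of the enclosing Proposition \ref{Prop:planargraph} is pushed into the combinatorial counting that combines several instances of this lemma, whereas the lemma itself is an immediate consequence of the forbidden-subgraph characterization of planarity. The Euler-formula edge bound is the cleanest way to keep the proof elementary and avoid invoking the full strength of Kuratowski's theorem.
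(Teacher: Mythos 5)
Your proposal is correct and follows exactly the paper's argument: assume three common neighbors exist, extract a $K_{3,3}$ subgraph between $S'$ and those neighbors, and contradict planarity (the paper likewise invokes the non-planarity of $K_{3,3}$ directly). The extra detail on proving $K_{3,3}$ non-planar via the Euler-formula edge bound is a fine, self-contained addition but does not change the approach.
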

    \begin{proof}
   Assume, on the opposite, that there exists a set of three vertices, say $V'=\{v_1, v_2, v_3\}\subset V$, such that $v_j\in \cap_{ i \leq 3 } \neighbours_G ( s_i ) \setminus S' $ for each $1\leq j\leq 3$. The graph $G$ induces a complete bipartite graph with vertices $S' \sqcup V'$, which is the bipartite graph with partitions of size $(\#S', \#V') = (3,3)$, namely $K_{3,3}$. Since $K_{3,3}$ is not planar, this contradicts the fact that $G$ is planar. This concludes the proof of Lemma \ref{lem:graph2}. 
      \end{proof}
 \begin{proof}[Proof of Proposition \ref{Prop:planargraph}] Assume on the opposite that $ \#\left(\neighbours_G ( s_i ) \cap \neighbours_G ( s_j ) \setminus S \right) \geq 21 $ for each $1\leq i<j\leq 5$.  Let $ T \subset V \setminus S $ be the set of all vertices (not in $ S $) which are neighbors of at least three vertices in $S$.  According to Lemma~\ref{lem:graph2}, we know that $ \# T  \leq 2 \binom{ 5 }{ 3 } = 20$. In particular, for each $1\leq i < j\leq 5$, we have:
 \[ \#\left( (\neighbours_G ( s_i ) \cap \neighbours_G ( s_j ) \setminus S)\cap (V\setminus T)\right) \geq 1.\]
  Thus there exists a family of vertices $\{v_{ij}: 1\leq i<j\leq 5\}$ such that, for each $1\leq i<j\leq 5$, the vertex $v_{ij}$ belongs to $\neighbours_G ( s_i ) \cap \neighbours_G ( s_j )$ and $v_{ij}$ does not belong to $\neighbours_G ( s_k )$ for $k\neq i,j$. Notice that the vertices $v_{ij}$ are distinct and that all the edges $(s_i, v_{ij})$, $(s_j, v_{ij})$ are disjoint since $G$ is planar.
  
  Now we construct a graph $G'=(V',E')$ as follows. The set of vertices is $V'=S$. Any pair of vertices, say $\{s_i, s_j\}$, is connected  by an edge: this edge is defined as the union of $(s_i, v_{ij})$ and $(v_{ij}, s_j)$. On a one hand, the graph $G'$ is planar since $G$ is planar. But, on the other hand, $G'$ is the complete graph with five vertices, namely $K_5$, which is not planar. This gives a contradiction, which concludes the proof of Proposition  \ref{Prop:planargraph}.      
    \end{proof}

\paragraph{Proof of Proposition \ref{Prop:cluster5}}
Let $E_k$ be the following event:
\[E_k = \left\{ \sum_{x\in \XX\cap B}\ind{d_\XX(x)=k}\geq 5  \right\}.\]
Then
\begin{align*}
\PPP{N_\XX^B[k]\geq 5} & \leq \PPP{E_k}+\PPP{\bigcup_{x\in \XX\cap B}\{d_\XX(x)\geq k+1\}}\\
& \leq \PPP{E_k}+\EEE{\sum_{x\in \XX\cap B} \ind{d_\XX(x)\geq k+1}  }\\
& = \PPP{E_k}+ \vol{2}{B} \PPP{\typ\geq k+1}.
\end{align*}
  According to Proposition \ref{Prop:Gilles}, we know that
    \[\PPP{\typ\geq k+1}  \eq[\rho]{\infty}\PPP{\typ = k+1} \leq c \, k^{-2} \PPP{\typ=k}.\]
    
Now, we have to show that $\PPP{E_k} \leq c^k\, \vol{2}{B}^2\, k^{-2k/23} \PPP{\typ=k}$, which constitutes the main difficulty of the proof of Proposition \ref{Prop:cluster5}. To do it, we apply Proposition \ref{Prop:planargraph}: if the event $E_k$ occurs then there exist five nodes $x_1, \ldots, x_5$ in $\XX\cap B$ with degree $k$ such that  $ \#\left( \neighbours_{ \XX } ( x_1 ) \cap \neighbours_{ \XX } ( x_2 ) \right) \leq \#\left( \neighbours_{ \XX } ( x_1 ) \cap \neighbours_{ \XX } ( x_2 ) \setminus \{ x_1 , \ldots , x_5 \} \right) + 3 \leq 23 $. Thus
\[\PPP{E_k}\leq \PPP{\bigcup_{(x_1,x_2)\in \XX\cap B} \left\{ d_\XX(x_1) = d_\XX(x_2)=k  \right\} \cap \left\{ \#(\neighbours_\XX(x_1)\cap \neighbours_\XX(x_2))\leq 23    \right\}   }.\]
It follows from the multivariate Mecke equation and the fact that $\XX$ is stationary that
\[\PPP{E_k} \leq \vol{2}{B} \int_{2B}\PPP{  \left\{ d_{\XX\cup\{0,x\}}(0) = d_{\XX\cup\{0,x\}}(x) = k\right\}\cap \left\{   \#(\neighbours_{\XX\cup\{0,x\}}(0)\cap \neighbours_{\XX\cup\{0,x\}}(x))\leq 23       \right\}    }\mathrm{d}x.\]
Note that the integration domain is $2B$ since, because of the symmetry of $B$, this is precisely the set of all differences $x_2-x_1$ for $x_1,x_2\in B$.
We bound below the right-hand side. To do it, we introduce for any $l\in\NN$, $x\in \RR^2$, $s\in [0, \infty]$ the set $D_{l,x,s}\subset\RR^{2l}$ which consists of the family of $l$-tuples of points $q_{1:l}=(q_1,\ldots, q_l)$ in $\RR^2$  such that the following properties hold simultaneously:
\begin{figure}
\begin{center}
  \begin{tikzpicture}[line cap=round,line join=round,>=triangle 45,scale=0.6]
    \coordinate (o) at (-1.92,0.8);
    \coordinate (p1) at (1.5035128251299215,-0.9068564020092966);
    \coordinate (p2) at (-2.172341407380274,-3.303612709017635);
    \coordinate (p3) at (-6.556045948177905,-1.149015718002367);
    \coordinate (p4) at (-5.376295434365511,4.395811696915887);
    \coordinate (p5) at (-1.7252780547776823,5.097452791972732);
    \coordinate (p6) at (1.3793285605180905,3.532731057863662);
    \coordinate (p7) at (0.8080809433036683,0.949698353937577);
    \coordinate (x) at (4.58,0.7);
    \coordinate (q1) at (7.340173261885974,5.147126497817465);
    \coordinate (q2) at (10.407524597798197,0.3163586044172388);
    \coordinate (q3) at (8.259136820013524,-4.154274921608677);
    \coordinate (q4) at (2.0002498835772453,0.8193048760951543);
    \coordinate (q5) at (1.975413030654879,1.142183964085915);
  
    \def \r{0.5 ex}
    
    \draw[fill] (o) node[left] {$0$} circle (\r);
    \draw[fill] (p1) node[below] {$p_1$} circle (\r);
    \draw[fill] (p2) node[below] {$p_2$} circle (\r);
    \draw[fill] (p3) node[left] {$p_3$} circle (\r);
    \draw[fill] (p4) node[left] {$p_4$} circle (\r);
    \draw[fill] (p5) node[above] {$p_5$} circle (\r);
    \draw[fill] (p6) node[above] {$p_6$} circle (\r);
    \draw[fill] (p7) node[above left] {$p_7$} circle (\r);
    \draw[fill] (x) node[above right] {$x$} circle (\r);
    \draw[fill] (q1) node[above right] {$q_1$} circle (\r);
    \draw[fill] (q2) node[right] {$q_2$} circle (\r);
    \draw[fill] (q3) node[below right] {$q_3$} circle (\r);
    \draw[fill] (q4) node[below right] {$q_4$} circle (\r);
    \draw[fill] (q5) node[above right] {$q_5$} circle (\r); 
    
    \foreach \i in {1,...,7} {\draw (o) -- (p\i);}
    \draw (p1) -- (p2) -- (p3) -- (p4) -- (p5) -- (p6) -- (p7) -- (p1);
    \foreach \i in {1,...,5} {\draw (x) -- (q\i);}
    \draw (p1) -- (x) -- (p6);
    \draw (q1) -- (q2) -- (q3) -- (p1) -- (q4) -- (q5) -- (p6) -- (q1);
    \draw (q5) -- (p7) -- (q4);
  
    \tkzCircumCenter(x,p1,q3)\tkzGetPoint{o1} \tkzDrawArc[color=black,very thick](o1,p1)(q3)
    \tkzCircumCenter(x,q4,p1)\tkzGetPoint{o2} \tkzDrawArc[color=black,very thick](o2,q4)(p1)
    \tkzCircumCenter(x,q5,q4)\tkzGetPoint{o3} \tkzDrawArc[color=black,very thick](o3,q5)(q4)
    \tkzCircumCenter(x,p6,q5)\tkzGetPoint{o4} \tkzDrawArc[color=black,very thick](o4,p6)(q5)
    \tkzCircumCenter(x,q1,p6)\tkzGetPoint{o5} \tkzDrawArc[color=black,very thick](o5,q1)(p6)
    \tkzCircumCenter(x,q2,q1)\tkzGetPoint{o6} \tkzDrawArc[color=black,very thick](o6,q2)(q1)
    \tkzCircumCenter(x,q3,q2)\tkzGetPoint{o7} \tkzDrawArc[color=black,very thick](o7,q3)(q2)
    
    \tkzDrawArc[color=black,fill,opacity=0.3, very thin](o6,q2)(x)
    \tkzDrawArc[color=black,fill,opacity=0.3, very thin](o7,x)(q2)
  \end{tikzpicture}
\end{center}
\caption{\label{fig:Dlxs} Part of the Delaunay graph $\del(\XX\cup\{0,x\})$ around the points $0$ and $x$.}
Black thick curve: contour of the Voronoi flower centered at $x$.\\
Gray region: union of the circumscribed disks of $\{x,q_1,q_2\}$ and $\{x,q_2,q_3\}$.
\end{figure}
\[ \mathscr{P}:  \left\{ \begin{split} &   q_1, \ldots, q_{l} \text{ are clockwise ordered around $x$ };\\
& q_j\not\in B(x,q_i,q_{i+1}) \text{ for any $i< l$ and $j\leq l$};\\
& \vol{2}{ \bigcup_{i< l}B(x,q_i,q_{i+1}) } \leq s.
 \end{split}     \right.\]
Here ``clockwise ordered around $x$'' means that the points appear in order when viewed from $x$ and turning clockwise.
These properties are illustrated by Figure \ref{fig:Dlxs}.
In this figure the points $q_1$, $q_2$ and $q_3$ are three consecutive neighbors of $x$ (clockwise ordered around $x$). The circumscribed disks of $\{ x , q_1 , q_2 \}$ and $\{ x , q_2 , q_3 \}$ are petals of the Voronoi flower centered at $x$, and therefore the area of their union is less then $\Phi_\XX(x)$. These facts imply that $q_{1:3}$ is an element of $D_{x,3,\Phi_\XX(x)}$.

Note that, contrary to the set $C_k$ introduced just before Lemma \ref{Le:explicitqk}, the set $D_{l,x,s}$  is not stable under coordinates permutation.
This is due to the clockwise orientation restriction.
We will also use several times the following homogeneity properties which hold for any $l\in\NN$, $x\in\RR^2$ and $0<s<t$, 
\begin{equation} \label{eq:homogeneity}
    D_{l,x,s} = x + D_{l,0,s}, \qquad
    D_{l,x,s} \subset D_{l,x,t}, \qquad 
    V_{2l}(D_{l,x,s}) = s^l V_{2l}(D_{l,x,1}).
\end{equation}

Now, let $x\in B$ be fixed. Assume that the following events $\{d_{\XX\cup\{0,x\}}(0) = d_{\XX\cup\{0,x\}}(x) = k\}$, $\left\{   \#(\neighbours_{\XX\cup\{0,x\}}(0)\cap \neighbours_{\XX\cup\{0,x\}}(x))\leq 23       \right\}$ and $ \{\vol{2}{F_{\XX\cup\{0,x\}}(x)}\leq \vol{2}{F_{\XX\cup\{0,x\}}(0)}\}$ hold simultaneously. In particular, there exist at least $k-23$ neighbors of $x$ which do not belong to the Voronoi flower $F_{\XX\cup\{0,x\}}(0)$. Thus there exists at least $k'=\left\lfloor \frac{k-23}{23}\right\rfloor$ \textit{consecutive} (clockwise ordered around  $x$) neighbors of $x$, which are not neighbors of 0. Thus  there exists a $k'$-tuple of points $p_{1:k'}\in\XX^{k'}$ such that the family of properties $\mathscr{P}$ holds, with $l=k'$ and $s=\vol{2}{F_{\XX\cup\{0,x\}}(0)}$. Therefore \[\PPP{E_k}\leq 2\, V_2(B) \int_{2B}\PPP{\{d_{\XX\cup\{0,x\}}(0)=k\} \cap \{(\XX\setminus \neighbours_{\XX\cup\{0,x\}}(0))^{k'}_{\neq}\cap D_{k',x, \vol{2}{ F_{\XX\cup\{0,x\}}(0) } }\neq \varnothing\}   }\mathrm{d}x.\] 
The factor 2 comes from the fact that $ \vol{2}{F_{\XX\cup\{0,x\}}(x)}$ was assumed to be less than $\vol{2}{F_{\XX\cup\{0,x\}}(0)}$. Now, we discuss two cases: the first one is when $x$ and 0 are not neighbors and the second one deals with the complement event.

\paragraph{Case 1. The nodes $x$ and 0 are not neighbors}
In this case, we bound for any $x\in 2 B$, the following probability:
\[P_1(x) = \PPP{\{d_{\XX\cup\{0,x\}}(0)=k\} \cap \{(\XX\setminus \neighbours_{\XX\cup\{0,x\}}(0))^{k'}_{\neq}\cap D_{k',x, \vol{2}{F_{\XX\cup\{0,x\}}(0)}}\neq \varnothing\}  \cap \{x\not\in \neighbours_{\XX\cup\{0,x\}}(0)\}}.\]
To do it, we write
\begin{equation*}
P_1(x) \leq  \frac{1}{k!} \EEE{ \sum_{(p_{1:k},q_{1:k'})\in\XX^{k+k'}_{\neq}} \ind{F_{p_{1:k}}(0)\cap \XX=\varnothing  } \ind{p_{1:k}\in C_k}   \ind{q_{1:k'}\in D_{k',x, \Phi_{p_{1:k}}(0)}}  
},
\end{equation*}
where we recall that $F_{p_{1:k}}(0)$ is the Voronoi flower with nucleus $0$ induced by the set of points $\{0,p_1, \ldots, p_k\}$. Notice that we have divided by $k!$ because $C_k$ is stable under permutations which is not the case for $D_{l,x,s}$.
It follows from the multivariate Mecke equation that 
\begin{equation}
\label{eq:case1notneighbor}
P_1(x)  \leq  \frac{1}{k!} \int_{\RR^{2k}}\int_{\RR^{2k'} } \PPP{F_{p_{1:k}}(0) \cap \XX=\varnothing   }
\ind{p_{1:k}\in C_k}   \ind{q_{1:k'}\in  D_{k',x,\Phi_{p_{1:k}}(0)}  }\mathrm{d}q_{1:k'}\mathrm{d}p_{1:k}.
\end{equation}
Integrating over $q_{1:k'}$, it follows from Fubini's theorem and the fact that $\XX$ is a Poisson point process, that
\[
P_1(x)  \leq \frac{1}{k!} \int_{C_k}e^{-\Phi_{p_{1:k}}(0)}\, \vol{2k'}{D_{k',x,\Phi{p_{1:k}}(0)}} \mathrm{d}p_{1:k}.
\]
As in the proof of Lemma \ref{Le:explicitqk}, we use the fact that $e^{-\Phi_{p_{1:k}}(0)}=\int_0^\infty e^{-s}\ind{\Phi_{p_{1:k}}(0)\leq s}\mathrm{d}s$.
The change of variables $p_{1:k} = s^{1/2}y_{1:k}$ and the properties \eqref{eq:homogeneity} give
\[P_1(x)\leq  \frac{1}{k!} \int_0^\infty e^{-s}\int_{C_k}\ind{\Phi_{y_{1:k}}(0)\leq 1}s^{k+k'}\, \vol{2k'}{D_{k',x, 1}} \mathrm{d}y_{1:k}\mathrm{d}s.\]
Integrating over $s$, we deduce from Lemma \ref{Le:explicitqk} that 
\[P_1(x) \leq \frac{(k+k')!}{k!} \PPP{\typ=k}\, \vol{2k'}{D_{k',x, 1}}.\]
The next lemma provides an upper bound for $\vol{2k'}{D_{k',x, 1}}$. 
\begin{Le}
\label{Le:majvol}
There exists a constant $c>0$ such that, for any $ j \in \NN $ and $ x \in \RR^2 $,
\[ \vol{2j}{D_{j,x, 1}} \leq  \frac{c}{(j-1)!}\PPP{\typ=j } . \]
\end{Le}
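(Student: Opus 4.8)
The plan is to convert the purely geometric quantity $\vol{2j}{D_{j,x,1}}$ into a probabilistic count, using the two devices already employed for Lemma \ref{Le:explicitqk}: homogeneity and the multivariate Mecke formula. By the first relation in \eqref{eq:homogeneity} we have $D_{j,x,s}=x+D_{j,0,s}$, so by translation invariance of Lebesgue measure on $\RR^{2j}$ we may assume $x=0$. Writing $\Phi^{\mathrm{ch}}(q_{1:j}) := \vol{2}{\bigcup_{i<j}B(0,q_i,q_{i+1})}$ for the area of the chain of petals, the identity $e^{-t}=\int_t^\infty e^{-s}\dint s$ together with the scaling $\vol{2j}{D_{j,0,s}} = s^j\vol{2j}{D_{j,0,1}}$ from \eqref{eq:homogeneity} gives, exactly as in the proof of Lemma \ref{Le:explicitqk},
\[ W_j := \int_{D_{j,0,\infty}} e^{-\Phi^{\mathrm{ch}}(q_{1:j})}\dint q_{1:j} = \int_0^\infty e^{-s}\,\vol{2j}{D_{j,0,s}}\dint s = j!\,\vol{2j}{D_{j,0,1}}, \]
so that $\vol{2j}{D_{j,0,1}}=W_j/j!$ and it remains to bound $W_j$.

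Next I would read $W_j$ as an expectation. Because $e^{-\Phi^{\mathrm{ch}}(q_{1:j})}$ is the void probability $\PPP{\XX\cap\bigcup_{i<j}B(0,q_i,q_{i+1})=\varnothing}$, the multivariate Mecke formula yields $W_j=\EE[N_j]$, where
\[ N_j := \sum_{q_{1:j}\in\XX^j_{\neq}} \ind{q_{1:j}\text{ clockwise around }0}\prod_{i<j}\ind{B(0,q_i,q_{i+1})\cap\XX=\varnothing}. \]
The geometric point is that emptiness of the open disk $B(0,q_i,q_{i+1})$ makes $(0,q_i,q_{i+1})$ a triangle of $\del(\XX\cup\{0\})$; hence each tuple contributing to $N_j$ is exactly a clockwise-ordered list of $j$ consecutive neighbours of $0$, i.e.\ a fan of $j-1$ consecutive triangular faces around $0$. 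A vertex $0$ of degree $m$, whose neighbours surround it almost surely, hosts at most $m$ such fans — at most one per choice of the first neighbour, the others being then determined by the clockwise orientation — and none when $m<j$. Using $\typ\overset{\rm d}{=}d_{\XX\cup\{0\}}(0)$ from \eqref{eq:proptypical}, this gives
\[ W_j=\EE[N_j] \leq \sum_{m\geq j} m\,\PPP{\typ=m}. \]

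Finally I would collapse this tail using the recursion in Proposition \ref{Prop:Gilles}\eqref{th:upperbound} with $d=2$. Since $\PPP{\typ=m}\leq c_2\,m^{-2}\,\PPP{\typ=m-1}$ for $m$ large, the terms $b_m:=m\,\PPP{\typ=m}$ satisfy $b_{m+1}/b_m\leq c_2/m^2$, so they decay super-geometrically and $\sum_{m\geq j}b_m\leq c\,j\,\PPP{\typ=j}$ for all $j$ large enough. Combined with $\vol{2j}{D_{j,0,1}}=W_j/j!$ this yields
\[ \vol{2j}{D_{j,0,1}} \leq \frac{c\,j}{j!}\,\PPP{\typ=j} = \frac{c}{(j-1)!}\,\PPP{\typ=j}, \]
which is the claim. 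This is precisely the regime used in the application, where the index $j=k'$ grows with $k$, so the finitely many small values of $j$ play no role.

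I expect the crux to be the geometric and combinatorial bookkeeping of the second paragraph: one must verify rigorously that empty chain-petals force $q_1,\dots,q_j$ to be \emph{consecutive} Delaunay neighbours of $0$, and that a degree-$m$ vertex carries at most $m$ clockwise length-$j$ fans. This requires keeping careful track of the orientation constraint (so that cyclic wrap-arounds are excluded) and of the almost-sure fact that the added point $0$ is surrounded by its neighbours. By contrast, the homogeneity reduction of the first paragraph and the tail summation of the last are routine.
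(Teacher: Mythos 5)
Your proposal is correct and follows essentially the same route as the paper: reduce to $x=0$ by translation invariance, rewrite $j!\,\vol{2j}{D_{j,0,1}}$ as $\int_{D_{j,0,\infty}}e^{-\tilde{\Phi}}$ via scaling, convert this by the multivariate Mecke equation into the expected number of clockwise fans of $j$ consecutive Delaunay neighbours of $0$ (the paper even notes there are \emph{exactly} $l$ such fans when $d_{\XX\cup\{0\}}(0)=l\geq j$, where your "at most $l$" suffices), and collapse $\sum_{m\geq j}m\,\PPP{\typ=m}$ to $c\,j\,\PPP{\typ=j}$ with Proposition \ref{Prop:Gilles}. Your closing caveat about only covering large $j$ applies equally to the paper's own argument (Proposition \ref{Prop:Gilles} is asymptotic), so it is not a gap specific to your proof.
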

\begin{proof}
  First, we notice that this term actually does not depend on $x$ since $D_{j,x,s}=x+D_{j,0,s}$ for any $j,x,s$. 
  In the proof of this lemma we will use the notation $ \tilde{\Phi}_{p_{1:j}}( x ) := \vol{2}{ \cup_{ i \leq j - 1 } B ( x , p_i , p_{ i + 1 } ) } $ for any $ p_{1:j} \in D_{j,x,\infty} $. 
  Similarly as above, we combine the substitution $ p_{1:j} = s^{1/2} y_{1:j} $ with the observation that $ \int_0^\infty e^{-s}\ind{\tilde{\Phi}_{p_{1:j}}(0)\leq s}\mathrm{d}s = e^{-\tilde{\Phi}_{p_{1:j}}(0)}$.
  This gives
  \begin{align*}
    \vol{2j}{D_{j,0, 1}} 
    & = \frac{ 1 }{ j ! } \int_0^\infty e^{ - s } \int_{ D_{ j , 0 , \infty } } \ind{ \tilde{\Phi}_{y_{1:j}}(0) \leq 1 }  s^{ j } \, \dint y \, \dint s 
    = \frac{ 1 }{ j ! } \int_{ D_{ j , 0 , \infty } } e^{-\tilde{\Phi}_{p_{1:j}}(0)} \, \dint p .
  \end{align*}
  Using the fact that $ e^{-\tilde{\Phi}_{p_{1:j}}(0)} = \EEE{ \ind{ \XX \cap \cup_{ i \leq j-1 } B(0,p_i,p_{i+1}) = \emptyset } } $, the multivariate Mecke equation implies that 
  \begin{align*}
    \vol{2j}{D_{j,0, 1}}  
    & = \frac{ 1 }{ j ! } \EEE{ \sum_{ p_{1:j} \in \XX_{\neq}^{j} } \ind{ p_{1:j} \in D_{ j , 0 , \infty } } \ind{ \XX \cap \cup_{ i \leq j-1 } B(0,p_i,p_{i+1}) = \emptyset } } .
  \end{align*} 
  Note that {a.s. the random variable $d_{\XX\cup\{0\}}(0)$ is larger than $j$ }  whenever the indicator functions above are equal to one.
  Moreover if ${d_{\XX\cup\{0\}}(0)} = l \geq j$, then there exist exactly $ l $ tuples of points $p_{1:l}$ in $\XX$ such that the corresponding events hold.
  In fact $p_1$ must be a neighbor of $ 0$ in $Del(\XX) $, and picking it arbitrarily implies that $ p_2 , p_3 , \ldots $ are the (clockwise oredered around $0$) neighbors of $0$. 
  Thus, {according to \eqref{eq:proptypical}}, we can write
  \begin{align*}
      \vol{2j}{D_{j,0, 1}} 
      & = \frac{ 1 }{ j ! } \EEE{ \sum_{ l \geq j } l \, \ind{ \typ = l } } 
      = \frac{ 1 }{ j ! } \sum_{ l \geq j } l \PPP{ \typ = l }  .
  \end{align*} 
  We conclude the proof by using the estimates of Proposition \ref{Prop:Gilles}.
\end{proof}

  According to Lemma \ref{Le:majvol}, for any $x\in \RR^2$, we have
\[P_1(x)  \leq c\, \frac{(k+k')!}{k!(k'-1)!} \PPP{\typ=k} \PPP{\typ=k'} \leq c^{k}\, k^{-2k/23} \PPP{\typ=k},\] where the second inequality is a consequence of Proposition \ref{Prop:Gilles} and the fact that $k'=\left\lfloor \frac{k-23}{23}\right\rfloor$.  Integrating over $x\in 2 B$, we deduce that $\int_{2 B}P_1(x)\mathrm{d}x \leq c^{k}\, \vol{2}{B}\,  k^{-2k/23} \PPP{\typ=k}$.

\paragraph{Case 2. The nodes $x$ and 0 are neighbors}
In this case,  for any $x\in B$, we deal with the following probability:
\[P_2(x) = \PPP{\{d_{\XX\cup\{0,x\}}(0)=k\} \cap (\XX\setminus \neighbours_{\XX\cup\{0,x\}}(0))^{k'}_{\neq} \cap D_{k',x, \vol{2}{F_{\XX\cup\{0,x\}}(0)}}\neq \varnothing\}  \cap \{x\in \neighbours_{\XX\cup\{0,x\}}(0)\}}.\]
Since we now consider situations where $x$ is one of the $k$ neighbors of $0$, it will be practical in the following lines to set $p_k = x$ in order to keep relatively short notation.
This time we write 
\begin{equation*}
P_2(x) = P_2(p_k) \leq \frac{1}{(k-1)!} \EEE{\sum_{(p_{1:k-1},q_{1:k'})\in\XX^{k-1+k'}_{\neq}}  \ind{F_{p_{1:k}}(0)\cap \XX=\varnothing  } 
\ind{p_{1:k}\in C_k}   \ind{q_{1:k'}\in D_{k',p_k, \Phi_{p_{1:k}}(0)}} }.
\end{equation*}
 Integrating over $x\in B$ and applying the multivariate Mecke equation as in the first case, we have
     \begin{equation*}
\int_{B}P_2(x)\mathrm{d}x  \leq  \frac{1}{(k-1)!} \int_{\RR^{2(k-1)}\times B}\int_{\RR^{2k'} } \PPP{F_{p_{1:k}}(0) \cap \XX=\varnothing   } \ind{p_{1:k}\in C_k}   \ind{q_{1:k'}\in  D_{k',p_k,\Phi_{p_{1:k}}(0)}  }\mathrm{d}q_{1:k'}\mathrm{d}p_{1:k}.
\end{equation*}       
The right-hand side is very similar to the upper bound in \eqref{eq:case1notneighbor}. There are only two differences between these upper bounds. The first one is that we integrate over $\RR^{2(k-1)}\times B$ instead of $\RR^{2k}$. The second one is that we consider the ratio $\frac{1}{(k-1)!}$ instead of $\frac{1}{k!}$. However, proceeding exactly along the same lines as in the first case, we obtain that  $\int_{2 B}P_2(x)\mathrm{d}x \leq c^{k}\, k^{-2k/23} \PPP{\typ=k}$. 

Since $\PPP{E_k}\leq 2 \vol{2}{B} \int_{ 2 B}(P_1(x)+P_2(x))\mathrm{d}x$, it follows from the two cases discussed above that $\PPP{E_k}\leq c^{k}\, \vol{2}{B}^2\,  k^{-2k/23} \PPP{\typ=k} $. This concludes the proof of Proposition \ref{Prop:cluster5}.

\subsection{Proof of Proposition \ref{Prop:maxblock}}
Recall that $M^B_\XX = \max_{x\in \XX\cap B} d_{\XX}(x)$ and $N^B_{\XX}[k] = \sum_{x\in \XX\cap B}\ind{d_{\XX}(x)\geq k}$ denote the maximum degree and the number of exceedances in the set $B$  respectively. This gives
\begin{align*}  \PPP{\typ\geq k} &  =\frac{1}{\vol{2}{B}}\EEE{\sum_{x\in \XX\cap B}\ind{d_{\XX}(x)\geq k}}\\
& = \frac{1}{\vol{2}{B}}\EEE{ N^B_{\XX}[k]\ind{N^B_{\XX}[k]\leq 4}\ind{M^B_{\XX}\geq k}} +  \frac{1}{\vol{2}{B}} \EEE{N^B_{\XX}[k]\ind{N^B_{\XX}[k]\geq 5}}.
\end{align*}
We bound $N^B_{\XX}[k] \ind{N^B_{\XX}[k]\leq 4}$ by $4$ in the first expectation and $N^B_{\XX}[k]$ by $\#(\XX\cap B)$ in the second one. We get
\begin{equation}
\label{eq:lowerboundtyp}
\PPP{\typ\geq k}  \leq \frac{4}{\vol{2}{B}} \PPP{M^B_{\XX}\geq k} + \frac{1}{\vol{2}{B}} \EEE{\#(\XX\cap B)\ind{N^B_{\XX}[k]\geq 5}}
\end{equation}
We show below that the second term of the right-hand side equals $o\left(\PPP{\typ=k}\right)$. To do it, we write
\begin{equation*}  \EEE{\#(\XX\cap B)\ind{N^B_{\XX}[k]\geq 5}}
 \leq k^{1+\epsilon}\vol{2}{B}\PPP{N_\XX^B[k]\geq 5} + \EEE{\#(\XX\cap B)\ind{\#(\XX\cap B)\geq k^{1+\epsilon} \vol{2}{B} }},
\end{equation*}
for some $\epsilon\in (0,1)$. According to Proposition \ref{Prop:cluster5}, since $\epsilon<1$, we have \[k^{1+\epsilon}\vol{2}{B}\PPP{N^\XX_B[k]\geq 5} = o\left(\PPP{\typ =k} \right)\] as $k$ goes to infinity.  Moreover, since $\#\,\XX\cap B$ is a Poisson random variable with parameter $\vol{2}{B}$, it follows from standard computations that
\begin{align*}
\EEE{\#(\XX\cap B)\ind{\#(\XX\cap B)\geq k^{1+\epsilon}\vol{2}{B}}} & \eq[k]{\infty} k^{1+\epsilon} \vol{2}{B}\PPP{\#\,\XX\cap B \geq   k^{1+\epsilon}\vol{2}{B}}\\
& \leq  k^{1+\epsilon} \vol{2}{B}\,\exp\left(- k^{1+\epsilon}\vol{2}{B}\right)\EEE{e^{\#\eta\cap B}},
\end{align*}
where the second line is a consequence of the Markov's inequality. Besides, according to Proposition \ref{Prop:Gilles}, we have  $k^{1+\epsilon} \exp\left(- k^{1+\epsilon}\vol{2}{B}\right) = o\left( \PPP{\typ=k}  \right)$. This implies that \[\EEE{\#(\XX\cap B)\ind{N^B_{\XX}[k]\geq 5}}  = o\left( \PPP{\typ=k}  \right).\] This together with \eqref{eq:lowerboundtyp} concludes the proof of Proposition \ref{Prop:maxblock}.

\subsection{Proof of Proposition \ref{Prop:maxblockd}}
Proceeding along the same lines as in the proof of Proposition \ref{Prop:maxblock} (see Equation \eqref{eq:lowerboundtyp}), we obtain for any $k\in \NN$, $h\geq 1$  that
\begin{equation*}
\PPP{\typ\geq k}  \leq \frac{h+1}{\vol{d}{B}} \PPP{M^B_{\XX}\geq k} + \frac{1}{\vol{d}{B}} \EEE{N^B_{\XX}[k]\ind{N^B_{\XX}[k]\geq h+1}}.
\end{equation*}
To deal with the second term of the right-hand side, we apply the multivariate Mecke equation. This gives
\begin{align*}
\EEE{N^B_{\XX}[k]\ind{N^B_{\XX}[k]\geq h+1}} & = \EEE{\sum_{x\in \XX\cap B}\ind{d_\XX(x)\geq k}\ind{N^B_{\XX}[k]\geq h+1}}\\
& = \int_B\PPP{d_{\XX\cup\{x\}}(x)\geq k,N^B_{\XX\cup\{x\}}[k]\geq h+1}\mathrm{d}x.
\end{align*}
Bounding the integrand by the probability of the event $\{\#((\XX\cup\{x\})\cap B)\geq h+1\}$, which equals $\{\#\,\XX\cap B\geq h \}$ for almost all $x\in B$, we obtain
\begin{align*}
\EEE{N^B_{\XX}[k]\ind{N^B_{\XX}[k]\geq h+1}} & \leq \vol{d}{B} \PPP{\#\,\XX\cap B\geq h}.
\end{align*}
Thus
\begin{equation} \label{eq:lowerboundtypd}\PPP{\typ\geq  k} \leq \frac{h}{\vol{d}{B}} \PPP{M^B_{\XX}\geq k} + \PPP{\#\,\XX\cap B\geq h}.
\end{equation}
Since the random variable $\#\,\XX\cap B$ is Poisson distributed with parameter $\vol{d}{B}$, it follows from the Markov's inequality that 
\[\PPP{\#\,\XX\cap B\geq h} \leq e^{-h}\EEE{e^{\#\,\XX\cap B}} = \exp\left( -\vol{d}{B}\left(  1-e+\frac{h}{\vol{d}{B}} \right)  \right).\]  This together with \eqref{eq:lowerboundtypd} concludes the proof of Proposition \ref{Prop:maxblockd}.

\subsection{Proof of Lemma \ref{Le:union}}

For any $\omega\in \Omega$, let $d(\omega)$ be the number of $B^{(i)}$'s which contain $\omega$. Let also $a_i(l)=\PPP{\{\omega\in B^{(i)}: d(\omega)=l\}}$ for any $1\leq i, l\leq K$. According to Lemma 1 in \cite{KAT}, we know that
\[\PPP{  \bigcup_{ i =1}^K B^{(i)}  } = \sum_{i=1}^{K}\sum_{l=1}^K\frac{a_i(l)}{l}.\] Moreover, according to the assumption, we have $a_i(l)=0$ for any $i\leq K$ and $l>k$. Thus 
 \[\PPP{  \bigcup_{ i =1}^K B^{(i)}  } \geq \frac{1}{k}  \sum_{i=1}^{K}\sum_{l=1}^{k}a_i(l) = \frac{1}{k}\sum_{i=1}^K\PPP{B^{(i)}}.\]

\end{document}